\newtheorem{theorem}{Theorem}[section]
\newtheorem{lemma}[theorem]{Lemma}
\newtheorem{corollary}[theorem]{Corollary}
\theoremstyle{definition}
\newtheorem{definition}[theorem]{Definition}
\newtheorem{example}[theorem]{Example}
\newtheorem{remark}[theorem]{Remark}
\DeclareMathOperator{\rk}{rk}
\DeclareMathOperator{\Supp}{Supp}
\DeclareMathOperator{\diag}{diag}
\DeclareMathOperator{\diam}{diam}
\newcommand{\bcs}{\backslash}
\newcommand{\eps}{\varepsilon}
\newcommand{\SO}{\mathrm{SO}}
\newcommand{\BA}{\mathrm{BA}}
\newcommand{\R}{\mathbb{R}}
\newcommand{\Z}{\mathbb{Z}}
\newcommand{\Q}{\mathbb{Q}}
\newcommand{\N}{\mathbb{N}}
\newcommand{\NS}{\mathbb{S}}
\newcommand{\PP}{\mathbb{P}}
\newcommand{\bv}{\mathbf{v}}
\newcommand{\bw}{\mathbf{w}}
\newcommand{\be}{\mathbf{e}}
\newcommand{\x}{\mathbf{x}}
\newcommand{\y}{\mathbf{y}}
\newcommand{\bp}{\mathbf{p}}
\newcommand{\cC}{\mathcal{C}}
\newcommand{\cD}{\mathcal{D}}
\newcommand{\da}{Diophantine approximation}
\numberwithin{equation}{section}	
\newcommand\eq[2]{\begin{equation}\label{eq:#1}{#2}\end{equation}}
\newcommand {\equ}[1]     {\eqref{eq:#1}}
\newcommand\hd{Hausdorff dimension}
\newcommand\dist{\operatorname{dist}}
\title{Rational approximation on quadrics:\\ a simplex lemma and its consequences}
\author{Dmitry Kleinbock%
\thanks{D.K. was supported in part by NSF grant DMS-1600814.}
\ and Nicolas de Saxcé}
\date{}
\newcommand{\Addresses}{
  \bigskip
\begin{flushleft}
\footnotesize{

Dmitry Kleinbock,\par\nopagebreak
\textsc{Department~of~Mathematics,~Brandeis~University,~Waltham~MA~02454,~USA}\par\nopagebreak
\textit{E-mail address:} \texttt{kleinboc@brandeis.edu}

  \medskip

Nicolas~de~Saxcé,\par\nopagebreak
\textsc{CNRS, Université Paris 13 -- LAGA, 93430 Villetaneuse, FRANCE}\par\nopagebreak
\textit{E-mail address:} \texttt{desaxce@math.univ-paris13.fr}
}
\end{flushleft}
}
\begin{document}

\maketitle

\begin{abstract} 
We give elementary proof of stronger versions of several recent results  on intrinsic Diophantine approximation on rational quadric hypersurfaces $X\subset \PP^n(\R)$.  The main tool is a refinement of the simplex lemma, which essentially says that rational points  on $X$ which are sufficiently close to each other must lie on a totally isotropic rational subspace of $X$.
%  we give an upper bound for the uniform Diophantine exponent for simultaneous approximation to real numbers 
 %$\xi_1,\dots,\xi_d, \xi_1^2+\dots+\xi_d^2$, as well as to the numbers 
% $\xi_1,\dots,\xi_d$ under {the} condition $\xi^2_1+\dots+\xi_d^2 = 1$.
  \end{abstract}

\section{Introduction}

The classical theory of Diophantine approximation  studies the way points $\mathbf{x}\in\R^n$ are approximated by rational points $\frac{\mathbf{p}}{q}\in\Q^n$, taking into account the trade-off between the  size of $q$   and the distance between $\frac{\mathbf{p}}{q}$ and $\mathbf{x}$; see \cite{cassels, schmidt} for a general introduction. Sometimes  $\mathbf{x}$ is assumed to lie on a certain subset of $\R^n$, for example a smooth manifold $X$; this leads to the theory of \emph{Diophantine approximation on manifolds\/}, in which there is no distinction between rational points which do or do not lie in $X$ (this is referred to as {\it ambient\/} approximation).
%This subject has experienced rapid progress during the last two decades, see e.g.\ \cite{kleinbockmargulis} where a technique using dynamics on the space of lattices was first introduced, or \cite{beresnevich_groshev, beresnevich_ba} for more recent developments.

Let now $X$ be a rational quadric hypersurface of $\R^n$, let $\mathbf{x}\in X$ and let $\frac{\mathbf{p}}{q}\in\Q^n$ be such that the distance between $\mathbf{x}$ and $\frac{\mathbf{p}}{q}$ is less than $\psi(q)$, where $\psi$ is decaying fast enough, namely $\lim_{x\to\infty}x^2 \psi(x) = 0$. Then $\frac{\mathbf{p}}{q}$ must lie on $X$ whenever $q$ is large enough! This elementary observation, due to Dickinson and Dodson \cite{dickinsondodson} for $n = 2$ and more generally to Dru\c tu, see  \cite[Lemma 4.1.1]{drutu}, has in part motivated a new field of \emph{intrinsic approximation}, which examines the quality to which points on a manifold are approximated by rational points lying on that same manifold. 
The paper \cite{kleinbockmerrill}
%to which the present work can be considered a sequel and reading companion,
studies the case $X = \NS^{n-1}$, the unit sphere in $\R^{n}$.
%Its main results are  summarized in the next subsection.
Later in \cite{fkmsquadric} the results of \cite{kleinbockmerrill} were significantly strengthened and extended to the case of $X$ being an arbitrary rational quadric hypersurface. 
An even more general framework was developed in   \cite{fkmsgeneral}. Roughly speaking, in order to exhibit points on submanifolds  $X\subset \R^n$ which are close enough to rational points of $X$, one has to make use of  the structure of $X$ (indeed, in general it is not even guaranteed that $X\cap \Q^n$ is not empty).
On the other hand, it is shown in  \cite{fkmsgeneral} that to prove some  {\it negative} results, that is, to show that many points of   $X$   are not too close to rational points, one often does not need to know much about $X$.
The main tool on which the argument of  \cite{fkmsgeneral} is based is the {\it Simplex Lemma} originating in Davenport's work \cite{davenport}.
The version presented in \cite[Lemma~4.1]{fkmsgeneral} is very general -- it applies to any manifold embedded in $\R^n$ -- and at the same time %it is 
precise enough to yield some satisfying theorems in the case of quadric hypersurfaces.

The purpose of this note is to show that in the special case where $X$ is a rational quadric hypersurface, one can give more elementary and more geometric proofs of the results of \cite{fkmsgeneral}.
This new approach will also yield more precise theorems.
The main point is that one can prove a version of the simplex lemma with arbitrary hyperplanes replaced by $\Q$-isotropic subspaces of $X$;
this, in turn, yields refined information on the diophantine properties of $X$.

%\bigskip

A detailed account of the results that are derived here is given in the next section.
After that  in \S\ref{sec:simplex} we prove  the simplex lemma for quadrics, Lemma~\ref{simplexquadric}, which is central in all the subsequent developments.
Applications of the simplex lemma to Diophantine approximation on quadrics are presented in \S\ref{sec:diophantine}.
Those results are proved along the same lines as the analogous statements for Diophantine approximation in the Euclidean space $\R^n$, but the proofs are included to make the paper self-contained.
Finally, in \S\ref{sec:open} we discuss some open problems and possible further directions for the study of intrinsic Diophantine approximation on projective varieties.

\bigskip

 \noindent{\bf Acknowledgements.}  The authors are grateful to Emmanuel Breuillard, Nikolay Moshchevitin and Barak Weiss for motivating discussions and suggestions.

\section{General setting and main results of the paper}\label{exposition}

Since it will make the proofs more transparent, we shall from now on always work in the projective setting.
We denote by $\PP^{n}(\R)$ the $n$-dimensional real projective space.
The natural map from $\R^{n+1}$ to $\PP^{n}(\R)$ will be denoted by $\x\mapsto  [\x]$.
For $x,y$ in $\PP^{n}(\R)$ the distance between $x$ and $y$ is given by
\[ \dist(x,y):=\frac{\|\bv_x\wedge \bv_y\|}{\|\bv_x\|\|\bv_y\|},\]
where $\bv_x$ and $\bv_y$ are any nonzero vectors on $x$ and $y$, respectively, and $\|\cdot\|$ is the Euclidean norm.
If $v = [\bv]\in\PP^{n}(\Q)$, where $\bv = (v_1,\dots,v_{n+1})$ is an integer vector with coprime coordinates, 
%given by coprime integer homogeneous coordinates $[v_1:\dots:v_d]$, 
the {\sl height} of $v$ is simply 
\[ H(v):=\max_{1\leq i\leq n+1} |v_i|.\]
Given a point $x$ in $\PP^{n}(\R)$ we want to study how well $x$ is approximated by points $v$ in $\PP^{n}(\Q)$.

\begin{remark}\label{affproj}
In order to go back to the setting of %\da\ in 
$\R^n$, one can  consider an affine chart from an open subset of $\PP^{n}(\R)$ to $\R^{n+1}$.
For example, if $$U=\{[(x_1,\dots,x_{n+1})]\ :\ x_{n+1}\neq 0\},$$ one can use the chart
\[ \begin{array}{ccc}
U & {\overset{\Phi}{\longrightarrow}} & \R^{n},\\
{[(x_1,\dots,x_{n+1})]} & {\overset{\Phi}{\longmapsto}} & (\frac{x_1}{x_{n+1}},\dots,\frac{x_{n}}{x_{n+1}}).
\end{array} \] 
\end{remark}

We consider a projective rational quadric $X$, given as the set of zeros of a rational quadratic form $Q$ in ${n+1}$ variables.
Namely, for such $Q$ let us %denote by 
%\[
%L_Q := \{\x\in\R^{n+1}: Q(\x) = 0\}
%\]
%the {\sl light cone} of $Q$, and let 
consider \eq{X}{ X = [Q^{-1}(0)] = \big\{x\in \PP^n(\R) : x = [\x]\text{ with }Q(\x) = 0\big\}.}
%\[ X=\{x\in\PP^{n}\ |\ Q(x)=0\},\]
%\comm{A question: our $X$ has dimension $d-2$. So I am thinking: maybe it is better to shift the indices and start with $\R^{d+1}$, as was the case in  \cite{fkmsquadric}? This will probably be more convenient for the reader's understanding...}
%\comm{Maybe we don't need the notation $L_Q$ after all.}

%\smallskip
Let us say that a subspace $E\subset\R^{n+1}$ is  {\sl totally isotropic} {(with respect to $Q$)}
if $Q|_E \equiv 0$. If $E$ is as above, the projection $[E]\subset X$ of $E$ onto $\PP^{n}(\R)$ will be referred to as a totally isotropic projective subspace. Recall that the {\sl $\Q$-rank} $\rk_{\Q}X$ of the quadric $X$ is the maximal dimension of a %totally 
totally isotropic rational subspace of $\R^{n+1}$. If $\rk_{\Q}X > 0$, this is the same as the maximal dimension of a
totally isotropic rational projective subspace of $X$ plus one. 
%Note that rational totally isotropic one-dimensional subspaces of $\R^{n+1}$ are in one-to-one correspondence with the rational points 
%$X(\Q)$ of $X$. 
In particular, $\rk_{\Q}X > 0$ if and only if $X(\Q)\ne\varnothing$.

{\begin{example} \label{Sn} As an example of a transition between affine and projective quadrics, let us consider the case $X = \NS^{n-1}$. The affine sphere
\eq{sn}{Y = \big\{(y_1,\dots,y_n) : y_1^2 + \dots + y_n^2 = 1 \big\}}
can be identified with 
$$X =  \big\{ [(x_1,\dots,x_{n+1})] : x_1^2 + \dots + x_n^2 = x_{n+1}^2\big\}$$
via the map $\Phi$ described in Remark \ref{affproj}. Here $Q(\x) = x_1^2 + \dots + x_n^2 - x_{n+1}^2$, and the $\Q$-rank of $X$ is equal to $1$, with rational points of $X$ (or of $Y$) being in one-to-one correspondence with rational lines in $\R^{n+1}$ which are  totally isotropic with respect to $Q$.\end{example}}

Given a point $x$ in $X%(\R)
$, we shall be interested in the quality of rational approximations $v\in X(\Q)$ to %the point 
$x$. 
{Informally speaking, $v$ constitutes a reasonably good approximation to $x$ if $\dist(x,v)$ is small  and the height of $v$ is not too large. Note that if $v = [(\bp,q)]$, where  $(\bp,q)$ is a vector in $\Z^{n+1}$ with coprime coordinates, and $\Phi$ is the map defined in Remark \ref{affproj}, then $\Phi(v) = \bp/q$. When $x$ is close to $v$, $\y = \Phi(x)$ is close to $\bp/q$, therefore  the ratio of $H(v)$ and $|q|$ is bounded between two constants depending only on $x$. The same can be said about  the ratio of $\dist(x,v)$ and $\|\y - \bp/q\|$. Thus the quality of  rational approximations $v\in X(\Q)$ to %the point 
$x\in X$ is up to a multiplicative  constant the same as the quality of  rational approximations $\bp/q\in Y = \Phi(X)$ to $\y = \Phi(x)$.} 

 The basic theory of such approximations has been developed in \cite{fkmsquadric} by Fishman, Kleinbock, Merrill and Simmons. In particular it was proved there \cite[Theorem 5.1]{fkmsquadric} that if
%\footnote{\comm{I think we should define nonsingular, at least in a footnote.}} 
\eq{nonsing}
{\rk_{\Q}X > 0\quad\text{ and }\quad X\text{ is nonsingular
}}
%\comm{(should we explain  nonsingular?)}
(recall that a quadric hypersurface $X$ is said to be {\sl nonsingular} if the quadratic form that defines it is nondegenerate, i.e.\ has nonzero discriminant\footnote{This is also equivalent to $X$ being nonsingular as a projective algebraic variety.}),
then  for every $x\in X$ there exists $C_{x} > 0$ and a sequence $(v_k)_1^\infty$ in $X(\Q)$ such that
\begin{equation}
\label{dirichlet}
v_k\to  x \quad\text{ and }\quad \dist(v_k,x) \leq \frac{C_{x}}{H(v_k)}.
\end{equation}
{Previously this was established in \cite{kleinbockmerrill} by Kleinbock and Merrill   for $X = \NS^{n-1}$, in which case the constant $C_x$ can be chosen independently of $x$. In other words, in the affine language of Example \ref{Sn}, there exists a constant $C$ dependent only on $n$ such that for any $\y\in \NS^{n-1}$ there exist infinitely many $\bp/q\in  \NS^{n-1}$ with $\|\y - \bp/q\|\le C/|q|$.}

\smallskip

{Now for a point $x$ in a quadric hypersurface $X\subset \PP^{n}(\R)$, let us   define} the {(intrinsic)} {\sl Diophantine exponent} of $x$ by
\eq{beta}{ \beta(x) := \inf\left\{\beta>0\ |\ \exists\, c>0:\,\forall \,v\in X(\Q),\,
\dist(x,v)\geq cH(v)^{-\beta}\right\};}
then it follows that under the assumption \equ{nonsing}, $\beta(x) \geq 1$ for all $x\in X$.
On the other hand, it is shown in \cite[Theorem 1.5]{fkmsgeneral} that the opposite inequality $\beta(x) \leq 1$ is true for Lebesgue-almost every $x\in X$ 	in the generality when $X$ is not just a rational quadric but an arbitrary {non-degenerate} hypersurface.
Moreover, the same is true if the Lebesgue measure is replaced by an \emph{absolutely decaying} measure %from a certain class. 
(see \S\ref{sec:diophantine1}
%and  \cite{fkmsgeneral} 
for definitions and more detail).

This naturally leads to a question of exhibiting other measures $\mu$ on $X$ such that  $\beta(x) \leq 1$ for $\mu$-almost all $x\in X$.
This is reminiscent to the subject of \da\ on manifolds and fractals, which has been extensively developed during recent decades for ambient approximation in $\R^n$, see  \cite{bernikdodson, KLW, kleinbock-margulis}.
%\comm{Please add at least two more references:  Kleinbock, D. Y.; Margulis, G. A. Flows on homogeneous spaces and Diophantine approximation on manifolds. Ann. of Math. (2) 148 (1998), no. 1, 339–360. and Kleinbock, Dmitry; Lindenstrauss, Elon; Weiss, Barak On fractal measures and Diophantine approximation. Selecta Math. (N.S.) 10 (2004), no. 4, 479–523.} 
Measures satisfying the above property are usually called {\sl extremal}.
We shall also say that a submanifold $M\subset X$ is extremal if so is the Lebesgue measure on $M$ (by which we mean the restriction to $M$ of the $k$-dimensional Hausdorff measure where $k = \dim M$).

\smallskip

Our first theorem, which is actually a special case of a more general result, Theorem~\ref{extremalquadric}, refines \cite[Theorem 1.5]{fkmsgeneral} for rational quadrics $X$ as follows:

%Our first theorem  refines the aforementioned result of Fishman-Kleinbock-Merrill-Simmons \cite{fkmsgeneral}, is the following; the reader is referred to the beginning of Section~\ref{sec:diophantine} for the definition of isotropic decay of a measure.

\begin{theorem}[Extremality of %decaying measures
submanifolds of large dimension]
\label{exi}
Let $X$ be a rational quadric hypersurface in $\PP^{n}(\R)$, and let $M$ be a smooth submanifold of $X$ with $\dim M \geq\rk_{\Q}X$. Then $\beta(x) \le 1$ for Lebesgue-almost every $x\in M$.
%and assume that $X(\Q)\setminus\ker Q$ is non-empty.
%Let $\mu$ be an isotropically decaying measure on $X$.
%Then, for $\mu$-almost every $x\in X$,
%\[ \beta(x) = 1.\]
%This holds in particular if $\mu$ is the Lebesgue measure on a $k$-dimensional $C^1$ submanifold of $X$, with $k\geq\rk_{\Q}X$.
\end{theorem}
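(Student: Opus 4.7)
The plan is to bypass any Borel--Cantelli counting and argue directly: for each $\eps>0$, the simplex lemma forces all sufficiently good rational approximants of any $x$ with $\beta(x)>1+\eps$ to lie on a single rational totally isotropic projective subspace of $X$, after which the dimension hypothesis $\dim M\geq\rk_\Q X$ makes these subspaces too small to charge $M$. Taking $\eps=1/j\to 0$ at the end then yields the theorem.

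Concretely, fix $x\in M$ with $\beta(x)>1+\eps$ and pick a sequence $(v_k)\subset X(\Q)$ with $H(v_k)\to\infty$ and $\dist(x,v_k)\le H(v_k)^{-(1+\eps)}$. By the triangle inequality,
\[
\dist(v_k,v_{k'}) \;\leq\; H(v_k)^{-(1+\eps)}+H(v_{k'})^{-(1+\eps)},
\]
which decays strictly faster than $\max(H(v_k),H(v_{k'}))^{-1}$. For $k,k'$ large this falls below the threshold of Lemma~\ref{simplexquadric}, so $v_k$ and $v_{k'}$ must lie on a common rational totally isotropic subspace of $X$. Equivalently, if $B$ denotes the bilinear form polar to $Q$ and $\bv_k$ is a primitive integer representative of $v_k$, then $B(\bv_k,\bv_{k'})=0$; combined with $Q(\bv_k)=0$, this shows that the $\Q$-span of $\{\bv_k:k\ge k_0\}$ is a totally isotropic rational subspace of $\R^{n+1}$. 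Its projectivisation $L_x$ is therefore a rational totally isotropic projective subspace of $X$ of dimension at most $\rk_\Q X-1$, and since $v_k\to x$ and $L_x$ is closed we have $x\in L_x$.

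Consequently, writing $W_\eps=\{x\in M:\beta(x)>1+\eps\}$,
\[
W_\eps \;\subset\; \bigcup_{L}(M\cap L),
\]
the union being over the countable family of rational totally isotropic projective subspaces $L$ of $X$. By the dimension hypothesis every such $L$ satisfies $\dim L\leq \rk_\Q X-1<\dim M$, so $M\cap L\subset L$ has Hausdorff dimension at most $\dim L<\dim M$ and hence zero $(\dim M)$-dimensional Hausdorff measure on $M$. A countable union of null sets being null, $W_\eps$ has Lebesgue measure zero in $M$, and letting $\eps=1/j\to 0$ gives the result.

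The one real point to verify is the application of the simplex lemma, i.e.\ that the bound $\dist(v_k,v_{k'})\le 2H(v_k)^{-(1+\eps)}$ (for $H(v_{k'})\ge H(v_k)$) is indeed small enough to invoke Lemma~\ref{simplexquadric} at the heights in play. Since any exponent strictly larger than $1$ in the approximation condition beats the quadric simplex-lemma threshold, this is automatic for $k$ large. Beyond that the proof is structural: the simplex lemma translates Diophantine closeness into the algebraic statement that the approximants all sit on a low-dimensional totally isotropic subspace, and the dimension hypothesis finishes the argument without any counting.
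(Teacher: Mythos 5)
There is a genuine gap, and it is located exactly at the step you flagged as ``the one real point to verify''. Lemma~\ref{simplexquadric} puts rational points on a common totally isotropic subspace only when \emph{all} of them lie in a single ball of radius $\rho$ \emph{and all have height at most} $c\rho^{-1}$. For two approximants $v_k,v_{k'}$ of $x$ with $H(v_{k'})\ge H(v_k)$, any ball containing both must have radius $\rho\gtrsim \dist(v_k,v_{k'})$, and the best you can guarantee is $\rho\approx H(v_k)^{-(1+\eps)}$; the lemma then requires $H(v_{k'})\le c\rho^{-1}\approx cH(v_k)^{1+\eps}$, which has no reason to hold when the heights $H(v_k)$ and $H(v_{k'})$ are at very different scales (e.g.\ $H(v_{k'})=H(v_k)^{100}$). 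So you cannot conclude $B(\bv_k,\bv_{k'})=0$ for all pairs, and the subspace $L_x$ need not exist. In fact the intermediate claim is not just unproven but false: it would give $W_\eps\subset\bigcup_L(M\cap L)$ with $L$ ranging over the countable family of totally isotropic rational subspaces, of dimension at most $\rk_\Q X-1$. Taking $M=X=\NS^{n-1}$ (so $\rk_\Q X=1$ and the $L$'s are single rational points), this would force $W_\beta$ to be countable for every $\beta>1$, contradicting $\dim_H W_\beta=\frac{n-1}{\beta}>0$ quoted from \cite[Theorem 6.4]{fkmsquadric}. Points with exponent larger than $1$ are typically approximated by rational points whose heights grow so fast that no two of them ever fall within the scope of a single application of the simplex lemma.

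This is precisely why the paper's argument cannot avoid counting: one groups approximants into dyadic height ranges $2^k\le H(v)<2^{k+1}$, covers $M$ by balls of radius $2^{-k(1+\eps/2)}$ with bounded multiplicity, applies Lemma~\ref{simplexquadric} \emph{within each ball and each height range} to obtain one isotropic subspace $L_i$ per ball, and then uses a decay estimate for Lebesgue measure on $M$ near isotropic subspaces (the IAD property, valid because $\dim M\ge\rk_\Q X>\dim L_i$) to bound $\mu\bigl(B_i\cap L_i^{(2^{-k(1+\eps)})}\bigr)\le C2^{-k\alpha\eps/2}\mu(B_i)$, summing over $i$ and $k$ and invoking Borel--Cantelli. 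Your dimension observation (isotropic subspaces are too small to charge $M$) is the right ingredient, but it must enter through this quantitative neighborhood estimate at each scale, not through a global containment of $W_\eps$ in the subspaces themselves.
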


In the case where $X$ has $\Q$-rank one, the above theorem provides a very simple and satisfactory answer to the problem of Diophantine approximation on submanifolds of $X$: any positive-dimensional submanifold $M\subset X$ is extremal. Note that there is no {non-degeneracy} condition on the submanifold $M$.
This comes in contrast to the case of approximation in $\R^n$, where one has to require that the submanifold is not included in an affine subspace.
%When the quadric hypersurface has higher rank, the study of extremality of submanifolds is more subtle, and must require some quantitative non-divergence results in the space of lattices.
% \comm{(not sure if we need to state it here)}

\smallskip
In view of Theorem \ref{exi}, it is natural to ask, given a %sufficiently regular subset $K$ in $X$, 
submanifold $M$ of $X$ of dimension at least $\rk_{\Q}X$ and a fixed $\beta > 1$, how large the intersection $M\cap W_\beta$ can be, where $W_\beta$ denotes the set of points in $X$ whose Diophantine exponent is at least $\beta$.
Note that it was proved in \cite[Theorem 6.4]{fkmsquadric} that whenever $X$ satisfies \equ{nonsing}, the \hd\ of  $W_\beta$ is equal to $\frac{n-1}\beta$. Also in \cite{fishmanmerrillsimmons} some upper estimates for the \hd\ of $M\cap W_{\beta}$ was obtained in the case when $M$ supports an absolutely decaying and Ahlfors-regular  measure (see \S\ref{sec:diophantine2}
%and  \cite{fkmsgeneral} 
for details).
Our second application of the simplex lemma strengthens the main result of \cite{fishmanmerrillsimmons}. Here is a special case of a more general result, Theorem~\ref{hausdorffexponent}:

%In the case where $K$ is the support of an Ahlfors-regular measure, we derive below an upper bound for the Hausdorff dimension of $K\cap W_\beta$, depending on the regularity of $\mu$.
%We refer the reader to Theorem~\ref{hausdorffexponent} for the general statement, and only give here the result in the $\Q$-rank one case, which is easier to state.

\begin{theorem}[$\beta$-approximable points on submanifolds of large dimension]
\label{bei}
%Let $X$ be a rational quadric hypersurface of $\Q$-rank one,
%and $\mu$ an Ahlfors-regular measure of dimension $\delta$ on $X$.
%Writing $K=\Supp\mu$, we have, for every $\beta\geq 1$,
Let $X$ be a rational quadric hypersurface in $\PP^{n}(\R)$, and let $M$ be a $k$-dimensional smooth submanifold of $X$ with $k \geq\rk_{\Q}X$. Then one has 
%\comm{(Can you check that this is correct? or feel free to modify the statement)}
\[ \dim_H (M\cap W_\beta) \leq  k - (k+1 - \rk_{\Q}X)(1 - \tfrac{1}{\beta}).\]
%\frac{\delta}{\beta}.$$}
\end{theorem}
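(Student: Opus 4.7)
Write $r=\rk_\Q X$ and fix $\beta'\in(1,\beta)$. The condition $\beta(x)\geq \beta$ provides infinitely many rational approximants beating exponent $\beta'$, so
\[ M\cap W_\beta \subset \limsup_{h\to\infty}\, M\cap W^{(h)}_{\beta'},\qquad W^{(h)}_{\beta'}:=\bigcup_{\substack{v\in X(\Q)\\ 2^h\leq H(v)<2^{h+1}}} B(v, 2^{-h\beta'}). \]
It therefore suffices to cover $M\cap W^{(h)}_{\beta'}$ efficiently at scale $2^{-h\beta'}$ for each $h$ and then to sum the resulting contributions to the Hausdorff $s$-measure.

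Working inside a fixed compact piece of $M$, cover it by $O(2^{hk})$ mesoscale balls $B_i$ of radius $c\cdot 2^{-h}$ (using $\dim M=k$ and smoothness of $M$). Applied to each $B_i$ at scale $2^{-h}$, the quadric simplex lemma (Lemma~\ref{simplexquadric}) produces a totally isotropic rational projective subspace $L_i\subset X$ of projective dimension $\leq r-1$ containing every $v\in X(\Q)\cap B_i$ with $H(v)\leq 2^{h+1}$. Since $\beta'\geq 1$ gives $2^{-h\beta'}\leq 2^{-h}$, any $x\in M\cap W^{(h)}_{\beta'}\cap B_i$ lies within distance $2^{-h\beta'}$ of some such $v\in L_i$, and hence within that distance of $L_i$ itself in ambient $\PP^n$. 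The tube $L_i^{(2^{-h\beta'})}\cap B_i$ is a thickening of an $(r-1)$-dimensional disc of radius $2^{-h}$ by $2^{-h\beta'}$ in the transverse directions of $\PP^n$, so a direct volume count covers it by $O(2^{h(\beta'-1)(r-1)})$ ambient balls of radius $2^{-h\beta'}$. Combining the two estimates, $M\cap W^{(h)}_{\beta'}$ is covered by
\[ \mathcal N_h = O\bigl(2^{h[k+(r-1)(\beta'-1)]}\bigr) \]
balls of radius $2^{-h\beta'}$.

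For any $s>(r-1)+(k+1-r)/\beta'$ the exponent $k+(r-1)(\beta'-1)-s\beta'$ is strictly negative, so $\sum_{h\geq h_0}\mathcal N_h\bigl(2^{-h\beta'}\bigr)^s\to 0$ as $h_0\to\infty$, giving $\cH^s(M\cap W_\beta)=0$. Letting $\beta'\nearrow\beta$ yields $\dim_H(M\cap W_\beta)\leq (r-1)+(k+1-r)/\beta=k-(k+1-r)(1-1/\beta)$, as required.

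The structure is a routine Jarn\'\i k--Besicovitch cover-and-sum once Lemma~\ref{simplexquadric} is in hand; the substantive geometric point is that replacing the hyperplane of the classical simplex lemma by a totally isotropic $(r-1)$-dimensional rational subspace shrinks the tube in the second step and produces exactly the sharp exponent. A welcome consequence of performing the tube covering in the ambient projective space is that no transversality assumption on $M\cap L_i$ is needed; smoothness of $M$ alone drives the $O(2^{hk})$ mesoscale cover. The only mild technical issue to track is matching the implied constants in the mesoscale cover with those of Lemma~\ref{simplexquadric} so that each relevant approximant $v$ lies in the appropriate enlargement of its $B_i$; this is absorbed by adjusting the constant $c$ and a fixed amount of dyadic overlap.
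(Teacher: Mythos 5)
Your proof is correct --- the covering count, the convergence threshold $(r-1)+(k+1-r)/\beta'$, and the passage $\beta'\nearrow\beta$ all check out --- but it reaches the bound by a somewhat different route than the paper. The paper does not treat Theorem \ref{bei} directly: it deduces it from Theorem \ref{hausdorffexponent}, a statement about Ahlfors-regular measures that are $\alpha$-isotropically absolutely decaying, applied to Lebesgue measure on $M$, which is Ahlfors-regular of dimension $k$ and $(k+1-\rk_{\Q}X)$-IAD. The skeleton there is the same as yours (dyadic height ranges, a mesoscale cover at radius $\asymp 2^{-p}$, Lemma~\ref{simplexquadric} to place the relevant rational points on a totally isotropic rational subspace, a fine cover at radius $2^{-\gamma p}$, Hausdorff--Cantelli), but the fine-scale count is extracted measure-theoretically: the IAD inequality bounds $\mu(A_p\cap B)$ and Ahlfors regularity converts that bound into a count of balls centered on $\Supp\mu$. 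You instead cover the whole ambient tube $L_i^{(2^{-h\beta'})}\cap B_i$ in $\PP^n$ by a direct box count along the at most $(r-1)$-dimensional subspace $L_i$, using no decay property of any measure on $M$; for a $k$-dimensional submanifold the two counts coincide ($\asymp 2^{h(\beta'-1)(r-1)}$ per mesoscale ball), so you land on exactly the same exponent. What your version buys: it is more elementary, requires only an upper covering bound $O(2^{hk})$ on compact pieces of $M$ (so no transversality of $M$ with the subspaces $L_i$, and no verification of the IAD property), and it makes transparent why replacing hyperplanes by isotropic subspaces improves the classical exponent. What it gives up is the generality of Theorem~\ref{hausdorffexponent}, which also applies to fractal supports and can exploit decay exponents $\alpha$ larger than $\delta-\rk_{\Q}X+1$, where an ambient tube count would be lossy. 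Two small points to keep tidy, neither a genuine gap: the inclusion $M\cap W_\beta\subset\limsup_h W^{(h)}_{\beta'}$ holds after discarding the countable (hence dimension-zero) set $X(\Q)$ and after inserting an intermediate exponent $\beta''\in(\beta',\beta)$ to absorb the constant $c$ in the definition of $\beta(x)$; and, as you note, the simplex lemma should be applied to a fixed enlargement of $B_i$ with the mesoscale radius a small multiple of $2^{-h}$, so that the height threshold $2^{h+1}$ is honoured --- the same constant adjustment the paper makes in its own proofs.
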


As the third application of our simplex lemma, we study the winning property of the set $\BA_X$ of badly approximable points on $X$.
In our setting,
\eq{defba}{ \BA_X := \{ x\in X \ |\ 
\exists\, c>0:\,\forall\, v\in X(\Q),\ \dist(x,v)\geq cH(v)^{-1}\}.}
We define a version of Schmidt's game using only totally isotropic rational subspaces and show the associated winning property for the set $\BA_X$ (Theorem \ref{winning}).
%, strengthening results from \cite{fkmsquadric, fkmsgeneral}}.
Here is a special case:

\begin{theorem}[Thickness of $\BA_X$ on submanifolds of large dimension]
\label{bai}
Let $X$ be a rational quadric hypersurface in $\PP^{n}(\R)$.
%, with $\Q$-rank $k\geq 1$.
Then for any $C^1$ submanifold $M\subset X$ of dimension at least $\rk_{\Q}X$,
\[ \dim_H(\BA_X\cap M) = \dim M.\]
\end{theorem}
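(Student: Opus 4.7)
The plan is to establish the trivial upper bound $\dim_H(\BA_X\cap M) \le \dim M$, and then to derive the lower bound $\dim_H(\BA_X\cap M)\ge \dim M$ by leveraging Theorem~\ref{winning}. That theorem asserts that $\BA_X$ is a winning set for the variant of Schmidt's game in which Alice's moves at each round consist of deleting a neighborhood of a totally isotropic rational projective subspace of $X$. To use this in proving full Hausdorff dimension on $M$, I will play Schmidt's ordinary $(\alpha,\beta)$-game on $M$ itself, translating each legal move of Alice in the ambient isotropic game into an $\alpha$-move in the ordinary game on $M$.

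The crucial geometric input is the inequality
\[
\dim[E] \le \rk_\Q X - 1 \le \dim M - 1,
\]
valid for every totally isotropic rational projective subspace $[E]\subset X$. Consequently, $M$ is never locally contained in such an $[E]$. Combining this with the $C^1$ smoothness of $M$, I will extract a local \emph{diffuseness} property: there should exist constants $c,\rho>0$, uniform on compact pieces of $M$, such that for every small ball $B\subset M$ of radius $r$ and every totally isotropic rational projective subspace $[E]\subset X$, the set $B$ contains a sub-ball in $M$ of radius $cr$ disjoint from the $\rho r$-neighborhood of $[E]$. This reduces to the elementary Euclidean statement that a smooth $k$-dimensional submanifold of projective space always leaves room away from any affine subspace of dimension at most $k-1$, which is established via the implicit function theorem applied to a suitable projection transverse to $[E]$.

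Given the diffuseness, a standard transfer argument turns Alice's winning strategy for the isotropic game on $X$ into a winning strategy for an ordinary $(\alpha,\beta)$-Schmidt game played on $M$, with parameters depending on $c,\rho$ and the parameters supplied by Theorem~\ref{winning}. Since winning sets of the Schmidt game on a $C^1$ submanifold of dimension $k$ have Hausdorff dimension equal to $k$, this yields the desired $\dim_H(\BA_X\cap M) = \dim M$.

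The main technical hurdle will be verifying the diffuseness property with constants that are uniform simultaneously over compact pieces of $M$ and over all totally isotropic rational projective subspaces $[E]\subset X$. Only once this uniformity is in hand can one choose $(\alpha,\beta)$ in the $M$-game so that Alice's ambient winning strategy produces legal responses throughout the play; matching these constants is a bookkeeping obstacle, but it is routine once the diffuseness itself is established.
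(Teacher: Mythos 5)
Your proposal is correct and follows essentially the same route as the paper: combine Theorem~\ref{winning} with the observation that every totally isotropic rational projective subspace has dimension at most $\rk_{\Q}X-1\le\dim M-1$, so that $M$ is (uniformly, on compact pieces) diffuse with respect to such subspaces, then transfer the winning property to a game played on $M$ and invoke the full-dimension property of winning sets on Ahlfors-regular sets. The only cosmetic difference is that you pass to the ordinary $(\alpha,\beta)$-Schmidt game on $M$, whereas the paper stays with the isotropic absolute game on $M$ via its Lemma~\ref{diffuse} and then cites \cite[Lemma~5.3]{bfkrw}; both are instances of the same inheritance machinery from \cite{bfkrw}.
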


The properties of the set $\BA_X$ have %already 
been studied in \cite{fkmsgeneral}.
In particular, it was shown \cite[Theorem~4.3]{fkmsgeneral} that $\BA_X$ is hyperplane absolute winning  (see \S\ref{sec:diophantine3}
%and  \cite{fkmsgeneral} 
for the definition and more detail); this gave the conclusion of the above theorem for $M = X$.
%manifolds of codimension at most one.
The refined version given above has the advantage that it is optimal: indeed, if $M$ is any totally isotropic rational projective subspace of $X$ of dimension $\rk_{\Q}X-1$, then $\BA_X\cap M=\varnothing$.

\smallskip

%The plan of the paper is as follows.
%In Section~3, we prove the simplex lemma for quadric hypersurfaces.
%In Section~4, we derive three applications to Diophantine approximation, which are more general versions of Theorems~\ref{exi}, \ref{bei} and \ref{bai}.

%\comm{Here we should say that in the next section we will state and prove the simplex lemma, and in \S\ref{sec:diophantine} we will formulate and prove more general versions of the preceding three theorems. I stopped here, if you agree with my changes, maybe you can continue changing the remaining part of the paper?}

\section{Diagonal flows and the simplex lemma}
\label{sec:simplex}

The purpose of this section is to derive a simplex lemma, Lemma~\ref{simplexquadric}, for rational points on a rational  quadric hypersurface $X\subset\R^{n+1}$. 
For the proof, we shall relate good rational approximations to $x\in X$ %in a quadric hypersurface 
to the behavior of some diagonal orbit in the space of lattices in $\R^{n+1}$.
%\comm{I think this sentence needs to be changed, not clear what `short vectors in some orbit' means... }
%But first, we define the relevant objects.
%\comm{Which ones? I thought everything has been defined already..}

Recall that the classical simplex lemma\footnote{Here we restate it using the projective language.} states that for each $n\in\N$ there exists $c=c(n)>0$ such that if $x$ is a point in %the projective space 
$\PP^{n}(\R)$ and $\rho\in(0,1)$, then there exists an $(n-1)$-dimensional projective subspace of $\PP^{n}(\R)$ containing all rational points with height at most $c\rho^{-\frac{n}{n+1}}$ inside the ball $B(x,\rho)$. 
For a proof of the simplex lemma, we refer the reader to \cite[Lemma~4]{kristensenthornvelani}.

Here we consider a rational quadratic form $Q$ on $\R^d$  and study rational points on %the associated hypersurface
%\[ X=\{x\in\PP^{n}\ |\ Q(x)=0\}.\]
%The distance and the height on $X$ will be those induced by the distance and the height on $\PP^{n}$, described above.
$X$ as in \equ{X}.

\begin{lemma}[Simplex lemma for quadric hypersurfaces]
\label{simplexquadric}
Let $X$ be a rational quadric hypersurface in $\PP^{n}(\R)$.
Then there exists $c>0$ such that, for every ball $B_\rho\subset X$ of radius $\rho\in(0,1)$, the set
\[ B_\rho \cap \{v\in X(\Q)\ |\ H(v)\leq c\rho^{-1}\} \]
is contained in a totally isotropic rational projective subspace %$L\leq X(\Q)$
of $X$.
\end{lemma}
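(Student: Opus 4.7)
The plan is as follows. After clearing denominators, I may assume that $Q$ has integer coefficients, and let $B$ denote the associated symmetric bilinear form (so that $Q(\bv) = B(\bv,\bv)$ and $2B(\bv,\bw)\in\Z$ for all $\bv,\bw\in\Z^{n+1}$; the reuse of the letter $B$ for the ball $B_\rho$ should cause no confusion). For each $v\in X(\Q)$ I pick a primitive integer representative $\bv\in\Z^{n+1}$; then $Q(\bv)=0$ and $H(v)\leq\|\bv\|\leq \sqrt{n+1}\,H(v)$. The strategy is to show that if two rational points $v_1,v_2\in B_\rho\cap X(\Q)$ have height at most $c\rho^{-1}$ for a sufficiently small absolute constant $c$, then their primitive representatives satisfy $B(\bv_1,\bv_2)=0$. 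Once this is established, the $\Q$-span $E\subset\R^{n+1}$ of all such primitive representatives is totally isotropic: an arbitrary $\bv=\sum a_i\bv_i\in E$ satisfies $Q(\bv)=\sum_{i,j} a_i a_j B(\bv_i,\bv_j)=0$. Since $E$ is visibly rational, $[E]$ is the desired totally isotropic rational projective subspace of $X$.

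The heart of the argument is the bilinear estimate. After adjusting the signs of the primitive representatives, write $\hat{\bv}_i := \bv_i/\|\bv_i\|$ and decompose $\hat{\bv}_2 = a\hat{\bv}_1 + \bw$ with $\bw\perp\hat{\bv}_1$ and $a>0$. Then $\|\bw\|=\dist(v_1,v_2)\leq 2\rho$, and taking $\rho\leq 1/2$ for example gives $a=\sqrt{1-\|\bw\|^2}\geq 1/\sqrt{2}$. Expanding
\[ 0=Q(\hat{\bv}_2)=a^2 Q(\hat{\bv}_1)+2aB(\hat{\bv}_1,\bw)+Q(\bw) \]
and using $Q(\hat{\bv}_1)=0$ yields $B(\hat{\bv}_1,\bw)=-Q(\bw)/(2a)$, and therefore
\[ B(\hat{\bv}_1,\hat{\bv}_2) \;=\; aB(\hat{\bv}_1,\hat{\bv}_1)+B(\hat{\bv}_1,\bw) \;=\; -\frac{Q(\bw)}{2a}, \]
so $|B(\hat{\bv}_1,\hat{\bv}_2)|\leq C_Q\|\bw\|^2 \leq 4C_Q\rho^2$ for a constant $C_Q$ depending only on $Q$. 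Multiplying by $\|\bv_1\|\|\bv_2\|\leq (n+1)H(v_1)H(v_2)\leq (n+1)c^2\rho^{-2}$ gives
\[ |2B(\bv_1,\bv_2)|\;\leq\; 8C_Q(n+1)c^2, \]
with the factors of $\rho^{\pm 2}$ cancelling completely.

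Choosing $c>0$ small enough that $8C_Q(n+1)c^2<1$ then forces the integer $2B(\bv_1,\bv_2)$ to vanish, and the conclusion follows from the linear-algebra observation in the first paragraph. The only technical subtlety is the lower bound on $a$, i.e.\ ensuring that the leading coefficient of the orthogonal decomposition is bounded away from zero; this is handled either by restricting to small enough $\rho$ at the outset or, equivalently, by absorbing the required smallness into the final choice of $c$. After this, the argument is entirely elementary and does not require any dynamics on the space of lattices, although the section title suggests one may also phrase the same fact as a statement about short vectors in a diagonally rescaled lattice.
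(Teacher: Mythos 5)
Your argument is correct, and it takes a genuinely different route from the paper's. The paper proves Lemma~\ref{simplexquadric} dynamically: it puts $Q$ in the normal form \eqref{qgoodform}, conjugates the diagonal flow by an isometry $u_x$, and applies the Dani-type correspondence of Lemma~\ref{daniquadric} at time $e^t=\rho^{-1}$ to make the relevant integer vectors short; since $Q(g_t^x\bv)=Q(\bv)$, integrality then forces $Q(\bv_i\pm\bv_j)=0$, which is the same endgame as your ``$2B(\bv_1,\bv_2)\in\Z$ plus smallness'' (indeed $2B(\bv_1,\bv_2)=Q(\bv_1+\bv_2)-Q(\bv_1)-Q(\bv_2)$). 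You instead estimate $B(\bv_1,\bv_2)$ directly via the orthogonal decomposition $\hat{\bv}_2=a\hat{\bv}_1+\bw$, using $\|\bw\|=\dist(v_1,v_2)$ and the exact cancellation of the $\rho^{\pm2}$ factors; this is precisely the ``direct computation'' that the authors say in Remark~\ref{spheres} can be made for a general quadric but choose not to carry out. Your version is more elementary and self-contained (no normal form, no $u_x$, no Lemma~\ref{daniquadric}), while the paper's route displays the lattice-dynamics picture that also drives the stronger simplex lemma quoted in \S\ref{sec:open}. Two small points of bookkeeping: with $\rho\le 1/2$ you only get $\|\bw\|\le 1$, hence $a\ge 0$; to guarantee $a\ge 1/\sqrt{2}$ you need, say, $2\rho\le 1/\sqrt{2}$. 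As you indicate, this is harmless: for $\rho$ bounded below by a fixed $\rho_0$ one simply shrinks $c$ so that $c\rho^{-1}<1$, making the set of rational points of height at most $c\rho^{-1}$ empty, so the lemma holds for all $\rho\in(0,1)$ after adjusting $c$. With that adjustment spelled out, the proof is complete.
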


Let $B_Q$ be the symmetric bilinear form associated to the quadratic form $Q$ defining $X$.
The {\sl kernel} of $Q$ is defined by
\[ \ker Q = \{ x=[\x]\in\PP^n(\R)\ |\ \forall \,\y\in\R^{n+1},\, B_Q(\x,\y)=0\}.\]
Assuming that $X(\Q)\smallsetminus \ker Q$ is non-empty,
%\comm{need to define $\ker Q$, obviously it is not the preimage of $0$,  and also need to mention that otherwise the statement is trivially satisfied}
% We comment on the case $X(\Q)\subset\ker Q$ at the beginning of the proof of Lemma~\ref{simplexquadric}.
we may write, in some rational basis of $\R^{n+1}$,
\begin{equation}\label{qgoodform}
Q(x_1,\dots,x_{n+1}) = 2x_1x_{n+1} + \tilde{Q}(x_2,\dots,x_{n}),
\end{equation}
where $\tilde{Q}$ is a quadratic form in $n-1$ variables.
Let $G=\SO_Q(\R)$ be the group of unimodular linear transformations of $\R^{n+1}$ preserving the quadratic form $Q$.
The group $G$ acts transitively on $X\smallsetminus \ker Q$, which may be identified with the quotient space $X\simeq P\bcs G$, where $P$ is the stabilizer of the isotropic line $[\be_1]$ in the standard representation.
In fact, for $x\in X\smallsetminus \ker Q$, we may choose $u_x\in G\cap O_{n+1}(\R)$ such that $u_xx=[\be_1]$.

We shall consider the diagonal subgroup %\comm{if you are saying `flow', need to explain on which space the flow is taking place} 
$a_t=\diag(e^{-t},1,\dots,1,e^t)$ in $G$, and if $x\in X$, let
%\marginpar{Again, $u_x$ should be chosen in a compact subset of $G$.}
\[ g_t^x = u_x^{-1} a_t u_x.\]
%Let $P=\{g\in G\ |\ a_tga_{-t}\ \mbox{is bounded for}\ t\geq 0\}$.
%We identify $\PP^{n}(\R)$ with $P\bcs G$ by the map
%\[\begin{array}{ccc}
%P\bcs G \to \PP^{n}(\R)\\
%g \mapsto g\cdot[\be_1].
%\end{array}.\]
The lemma below is due to Kleinbock and  Merrill \cite{kleinbockmerrill} in the case of projective spheres, and to Fishman, Kleinbock, Merrill and Simmons \cite[Lemma~7.1]{fkmsquadric} in the general case.

\begin{lemma}[Dani correspondence for quadric hypersurfaces]
\label{daniquadric}
Let $Q$ be %an integer quadratic form on $\R^d$, given by the equation 
as in \eqref{qgoodform}, and write $X$ for the associated rational quadric hypersurface in $\PP^{n}(\R)$.
With the above notation, there exists $C > 0$ such that for $x\in X$ and $v\in X%(\Q)
$, we have, for all $t\in\R$,
\[ \|g_t^x\bv\| \leq C\max(e^{-t}H(v), H(v)\dist(x,v), e^tH(v)\dist(x,v)^2),\]
where $\bv\in\Z^d$ is a representant of $v$ with coprime integer coordinates.
\end{lemma}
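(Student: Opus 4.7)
The plan is to reduce the estimate to bounding the coordinates of $\bw:=u_x\bv$, exploiting two facts: (i) $u_x\in O_{n+1}(\R)$ is orthogonal, so projective distance on $X$ translates into the size of the last $n$ coordinates of $\bw$, and (ii) $Q(\bv)=0$ together with the normal form \eqref{qgoodform} yields an algebraic identity among the coordinates of $\bw$ that improves the trivial bound on $w_{n+1}$.

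Since $u_x^{-1}$ is an isometry, $\|g_t^x\bv\|=\|a_t\bw\|$, and hence
\[ \|g_t^x\bv\|^2=e^{-2t}w_1^2+w_2^2+\cdots+w_n^2+e^{2t}w_{n+1}^2. \]
Choose a unit representative $\bv_x$ of $x$ with $u_x\bv_x=\be_1$; then, since $u_x$ preserves wedges,
\[ \|\be_1\wedge\bw\|=\|\bv_x\wedge\bv\|=\dist(x,v)\|\bv\|, \]
and expansion of the left-hand side gives $w_2^2+\cdots+w_{n+1}^2=\dist(x,v)^2\|\bv\|^2$. Combined with the trivial $|w_1|\le\|\bv\|\le\sqrt{n+1}\,H(v)$, this already handles the contributions from $w_1$ and from $w_2,\dots,w_n$: respectively by $\sqrt{n+1}\,e^{-t}H(v)$ and by $\sqrt{n+1}\,H(v)\dist(x,v)$, matching the first two terms of the claimed maximum.

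The core step is to gain an additional factor of $\dist(x,v)$ on $|w_{n+1}|$. Since $u_x\in\SO_Q$, we have $Q(\bw)=Q(\bv)=0$, so \eqref{qgoodform} gives $2w_1w_{n+1}=-\tilde Q(w_2,\dots,w_n)$, and $|\tilde Q(w_2,\dots,w_n)|\le C'(w_2^2+\cdots+w_n^2)\le C'\dist(x,v)^2\|\bv\|^2$ for a constant $C'$ depending only on $\tilde Q$. Distinguish two cases. If $\dist(x,v)\le 1/\sqrt 2$, the identity $w_1^2=\|\bv\|^2-(w_2^2+\cdots+w_{n+1}^2)=(1-\dist(x,v)^2)\|\bv\|^2$ forces $|w_1|\ge\|\bv\|/\sqrt 2$, whence $|w_{n+1}|\le C''\dist(x,v)^2 H(v)$ and thus $e^t|w_{n+1}|\le C''\,e^tH(v)\dist(x,v)^2$. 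If instead $\dist(x,v)\ge 1/\sqrt 2$, then $\dist(x,v)\le\sqrt 2\,\dist(x,v)^2$, so the crude bound $|w_{n+1}|\le\dist(x,v)\|\bv\|$ already yields $e^t|w_{n+1}|\le\sqrt{2(n+1)}\,e^tH(v)\dist(x,v)^2$. Assembling the three coordinate estimates gives the lemma.

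The main subtlety is the case in which $w_1$ is small, where one cannot simply solve algebraically for $w_{n+1}$; the dichotomy on $\dist(x,v)$ handles this, and is precisely what makes the statement of the lemma a maximum of three terms rather than a single quantity, the three terms corresponding to the three dominant scales in the expansion of $\|a_t\bw\|$.
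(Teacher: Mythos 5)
Your argument is correct and follows essentially the same route as the paper's proof: pass to $u_x\bv$, use orthogonality of $u_x$ to control the first $n$ coordinates by $H(v)$ and $H(v)\dist(x,v)$, and exploit $Q(u_x\bv)=0$ in the normal form \eqref{qgoodform} together with the dichotomy $\dist(x,v)\lessgtr \tfrac{\sqrt2}{2}$ to gain the extra factor $\dist(x,v)$ on the last coordinate. The only differences are cosmetic (you work with the squared norm and a lower bound $|w_1|\ge\|\bv\|/\sqrt2$ made explicit, where the paper bounds the max of three terms), so nothing further is needed.
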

\begin{proof}
Since $\tilde{Q}$ is a quadratic form, we may choose $C_0\geq 2$ such that for all $\bw$ in $\R^{n-1}$, $|\tilde{Q}(\bw)|\leq C_0\|\bw\|^2$.
With $u_x$ as above, write
\[ u_x\bv= v_1\be_1+v_2\be_2+\dots+v_{n+1}\be_{n+1}.\]
Letting $\bw=v_2\be_2+\dots+v_{n}\be_{n}$, we have
\[ u_xg_t^x\bv= e^{-t}v_1\be_1+\bw+e^tv_{n+1}\be_{n+1},\]
and therefore, since $u_x$ is in $O_{n+1}(\R)$,
\begin{equation}\label{gtxv}
 \|g_t^x\bv\| \leq 3\max( e^{-t}|v_1|, \|\bw\|, e^t|v_{n+1}|).
\end{equation}
Of course, $|v_1|\leq H(v)$ and $\|\bw\|\leq\|\bw+v_{n+1}\be_{n+1}\|\leq H(v)\dist(x,v)$.
Moreover, $Q(u_x\bv)=0$ yields
$$|v_{n+1}| = \frac{|\tilde{Q}(\bw)|}{2|v_1|} \leq \frac{C_0\|\bw\|^2}{2|v_1|},$$
so that, provided $\dist(x,v)\leq\frac{\sqrt{2}}{2}$,
\[ |v_d| \leq \frac{C_0}{2}\frac{H(v)\dist(x,v)^2}{\sqrt{1-\frac{\dist(x,v)^2}{H(v)^2}}} \leq C_0H(v)\dist(x,v)^2.\]
Of course, if $\dist(x,v)\geq\frac{\sqrt{2}}{2}$, we also have $|v_d|\leq H(v)\leq C_0H(v)\dist(x,v)^2$, because $C_0\geq2$.
Going back to \eqref{gtxv}, we find the desired inequality, with $C=3C_0$.
\end{proof}

We can now prove the simplex lemma.

\begin{proof}[Proof of Lemma~\ref{simplexquadric}]
Let $Q$ be a quadratic form defining the hypersurface $X$.
The result is obvious if $X(\Q)\subset \ker Q$,  so we may assume that $X(\Q)\smallsetminus\ker Q$ is non-empty.
Then, replacing $Q$ if necessary by an integer multiple, we may find an integer basis of $\R^{n+1}$ in which $Q$ has the form \eqref{qgoodform}.

Fix a constant $C_1$ such that for all $\bv\in\R^{n+1}$, $|Q(\bv)|\leq C_1\|\bv\|^2$, and let $c=\frac{1}{C\sqrt{5C_1}}$, where $C$ is the constant given by Lemma~\ref{daniquadric}.
We need to show that any family $v_1,\dots,v_s$ of points in $X(\Q)\cap B(x,\rho)$ satisfying $H(v_i)\leq c\rho^{-1}$, $i=1,\dots,s$, generates a totally isotropic subspace. 
For each $v_i$, we take a representant $\bv_i$ in $\Z^{n+1}$ with coprime integer coordinates.
It is enough to show that for all $i$ and $j$, $Q(\bv_i\pm\bv_j)=0$, and since the quadratic form $Q$ takes integer values at integer points, it suffices to check that for all $i$ and $j$, $|Q(\bv_i\pm \bv_j)|$ is less than $1$.

Now, choosing $t>0$ such that $e^t=\rho^{-1}$, Lemma~\ref{daniquadric} shows that $\|g_t^x\bv_i\| \leq Cc$.
Then, we write
$$Q(\bv_i\pm \bv_j) = Q(g_t^x\bv_i\pm g_t^x\bv_j))
\leq C_1 \|g_t^x\bv_i\pm g_t^x\bv_j\|^2
\leq 4C_1 (C c)^2 =\frac{4}{5}.$$
This implies what we want.
\end{proof}

\begin{remark}\label{spheres}
In the case when $X=\NS^{n-1}$ is the $(n-1)$-dimensional sphere, %which may be 
identified with %the subset of $\R^{n}$ defined by the equation $x_1^2+\dots+x_{n}^2=1$, 
{$Y$ as in \equ{sn},} one can give a more direct proof of the simplex lemma.
Indeed, if $\frac{\bp_1}{q_1}$ and $\frac{\bp_2}{q_2}$ are two distinct rational points on $\NS^{n-1}$ of height at most $\frac{\rho^{-1}}{2}$, we have
\[
\left\|\frac{\bp_1}{q_1}-\frac{\bp_2}{q_2}\right\|^2
= 2 - \frac{(\bp_1,\bp_2)}{q_1q_2} \geq \frac{1}{q_1q_2} \geq 4\rho^2,\]
so that any open ball of radius $\rho$ contains at most one rational point of height at most $\frac{\rho^{-1}}{2}$.
In fact, such a direct computation can also be made for a general quadric hypersurface, but we chose to give a more geometric proof of Lemma~\ref{simplexquadric} here.
\end{remark}
%ALTERNATIVE PROOF FOR THE SIMPLEX LEMMA
%Take a=1/H(v_1) and b=-1/H(v_2). Identifying v_1, v_2 with their integer representants, one then has
%\[ Q(av_1+bv_2)   \leq C_Q. d(v_1,v_2)^2
%                         \leq 4.C_Q.max_{i=1,2} (d(v_i,x)^2)
%                         \leq 4.C_Q.max {i=1,2} (\rho/H(v_i))\]
%And since \rho\leq c\min(H(v_1)^{-1},H(v_2)^{-1}), we get
%\[ Q(av_1+bv_2) \leq 4.C_Q.c.H(v_1)^{-1}H(v_2)^{-1}.\]
%But since Q is an integer quadratic form and v_1, v_2 are isotropic, Q(av_1+bv_2) is a rational with denominator at most H(v_1)H(v_2), so it is at least H(v_1)^{-1}H(v_2)^{-1} unless it is zero.
%Taking c<(4C_Q)^{-1}, we find Q(av_1+bv_2)=0, so span(v_1,v_2) is totally isotropic, and the lemma is proved.

\begin{remark}
%In general, the dimension of a totally 
%isotropic linear subspace is bounded above by the $\Q$-rank of $\SO_Q$.
When the quadratic form $Q$ has $\Q$-rank one, %we get that 
the only isotropic rational projective subspaces are %the 
points in $X(\Q)$.
This %will 
makes the consequences of the simplex lemma more spectacular in the particular case of $\Q$-rank one.

%While we were finishing to write up this note, we became aware that the case where $Q$ has $\R$-rank one, i.e. where $X$ is a projective sphere, had already been studied by Beresnevich, Ghosh, Simmons and Velani \cite{bgsv} as part of the theory of diophantine approximation on limit sets of Kleinian groups.
%%This relates to the fact that diophantine approximation on a projective variety $P\bcs G$, with $G$ a semisimple $\Q$-group, is much easier when $G$ has rank one.
\end{remark}

\section{Applications to Diophantine approximation}
\label{sec:diophantine}

In this section, as before, $X$ is a rational quadric hypersurface in $\PP^{n}(\R)$ defined by a rational quadratic form $Q$.
We are concerned with  {intrinsic} Diophantine approximation on %the quadric hypersurface 
$X$, which is the study of the quality of approximations of a point $x$ in $X$ by rational points $v$  {lying on} $X$.
%\comm{this is probably kind of repetitive, we said it already in section 2..}
%Right, but it can only help the reader.
On that matter, the simplex lemma has several simple consequences, which we now explain.

\subsection{Extremality}\label{sec:diophantine1}

Recall that the Diophantine exponent of a point $x\in X$ was defined by \equ{beta}.
%\[ \beta(x) = \inf\{\beta>0\ |\ \exists c>0:\,\forall v\in X(\Q),\,
%d(x,v)\geq cH(v)^{-\beta}\}.\]
%In \cite{fkmsquadric}, it is proved that if $X(\Q)\smallsetminus\ker Q$ is non-empty, then almost every $x$ in $X$ satisfies $\beta(x)=1$.
%Following the terminology used in standard diophantine approximation in the projective space, we shall therefore say that a Borel measure $\mu$ on $X$ is \emph{extremal} if we have, for $\mu$-almost every $x$, $\beta(x)=1$.
%Using a very general version of the simplex lemma, the article \cite{fkmsgeneral} gives a sufficient decay condition for a measure to be extremal.
%That condition requires that the measure $\mu$ do not concentrate on hyperplanes of $\R^n$, see \cite[Theorem~1.5]{fkmsgeneral} for details.
Our next theorem generalizes Theorem \ref{exi} using the following definition.
%Using the version of the simplex lemma presented above, Lemma~\ref{simplexquadric}, we now show that this condition can be substantially weakened: it is enough to assume that $\mu$ does not concentrate on rational isotropic subspaces.
%The relevant definition is as follows.

\begin{definition}\label{def-ad}
Given a positive parameter $\alpha$, a finite Borel measure $\mu$ on the quadric hypersurface $X$ will be called {\sl $\alpha$-isotropically absolutely decaying}, abbreviated as {\sl $\alpha$-IAD}, if there exists a constant $C> 0$ such that for every $x\in X$ and every totally isotropic rational projective subspace $L\subset X$,
\eq{ad}{ \forall\,\eps>0 \ \forall\,\rho \in(0, 1),\quad 
\mu\big(B(x,\rho)\cap L^{(\eps\rho)}\big) \leq C\eps^\alpha\mu\big(B(x,\rho)\big),}
where $L^{(\tau)}$ denotes the neighborhood of size $\tau$ of the set $L$.
We shall say that $\mu$ is \emph{isotropically absolutely decaying} (IAD) if it is $\alpha$-IAD for some $\alpha>0$.
\end{definition}

\begin{theorem}[IAD measures are extremal]
\label{extremalquadric}
Let $X$ be a rational quadric in $\PP^{n}(\R)$, %and assume that $X(\Q)\setminus\ker Q$ is non-empty.
and let $\mu$ be an IAD measure on $X$.
Then $ \beta(x) \le 1$ for $\mu$-almost every $x\in X$.
%\[ \beta(x) \ge 1.\]
\end{theorem}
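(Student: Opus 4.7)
The natural strategy is a Borel--Cantelli argument at dyadic height scales, using Lemma~\ref{simplexquadric} to trap each batch of rational approximants in a totally isotropic rational projective subspace, and then invoking the IAD hypothesis to produce the decay needed for summability.

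By a standard reduction it suffices to show that for every $\beta>1$, the limsup set
\[ W_\beta^*:=\big\{x\in X:\dist(x,v)\leq H(v)^{-\beta}\text{ for infinitely many }v\in X(\Q)\big\}\]
is $\mu$-null. Indeed, $\{\beta(x)>1\}\subset X(\Q)\cup\bigcup_k W_{1+1/k}^*$: for irrational $x$ with $\beta(x)>1+1/k$ one extracts (via $c=1/m$ in the definition of $\beta$) a sequence of approximants witnessing $x\in W_{1+1/k}^*$; and $X(\Q)$ is $\mu$-null because each $\{v\}$ with $v\in X(\Q)$ is a $0$-dimensional totally isotropic rational projective subspace, so the IAD inequality with $\eps\to 0$ forces $\mu(\{v\})=0$. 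Decomposing dyadically, $W_\beta^*\subset \limsup_j E_j$ where
\[ E_j:=\bigcup_{\substack{v\in X(\Q)\\ H(v)\in[2^j,\,2^{j+1})}} B\big(v,\,H(v)^{-\beta}\big),\]
so by Borel--Cantelli it is enough to prove $\sum_j \mu(E_j)<\infty$.

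To estimate $\mu(E_j)$, set $\rho_j:=c\cdot 2^{-(j+2)}$ with $c$ the constant from Lemma~\ref{simplexquadric}. Since $\PP^n(\R)$ is a compact doubling metric space, fix a covering $X\subset\bigcup_i B(x_i,\rho_j)$ of bounded multiplicity $N=N(n)$. Applying Lemma~\ref{simplexquadric} to $B(x_i,2\rho_j)$, whose radius is $c/2^{j+1}$, yields a totally isotropic rational projective subspace $L_i\subset X$ containing every $v\in X(\Q)\cap B(x_i,2\rho_j)$ with $H(v)\leq 2^{j+1}$. For any $y\in E_j\cap B(x_i,\rho_j)$ with witness $v$ of height in $[2^j,2^{j+1})$, one has $\dist(y,v)\leq 2^{-j\beta}\leq \rho_j$ for $j$ large (using $\beta>1$), so $v\in B(x_i,2\rho_j)$, hence $v\in L_i$ and $y\in L_i^{(2^{-j\beta})}\cap B(x_i,\rho_j)$. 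The IAD inequality with $\eps=2^{-j\beta}/\rho_j=O(2^{-j(\beta-1)})$ then gives $\mu\big(B(x_i,\rho_j)\cap L_i^{(2^{-j\beta})}\big)\leq C\eps^\alpha \mu\big(B(x_i,\rho_j)\big)$, and summing over the cover yields
\[ \mu(E_j)\,\leq\,\sum_i \mu\big(B(x_i,\rho_j)\cap L_i^{(2^{-j\beta})}\big)\,\leq\, CN\eps^\alpha\mu(X)\,=\,O\!\big(2^{-j\alpha(\beta-1)}\big).\]
Since $\beta>1$ this is summable, and Borel--Cantelli finishes the proof.

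The main technical point I anticipate is calibrating $\rho_j$: it must be small enough that Lemma~\ref{simplexquadric} applied to $B(x_i,2\rho_j)$ accommodates the entire dyadic height block $[2^j,2^{j+1})$ inside a single isotropic subspace, yet large enough that the approximation radius $2^{-j\beta}$ is dominated by $\rho_j$, so rational approximants to points of $B(x_i,\rho_j)$ stay inside $B(x_i,2\rho_j)$. Using the ambient doubling property of $\PP^n(\R)$ rather than any doubling of $\mu$ is also important, since IAD alone does not yield $\mu$-doubling; all else is standard Borel--Cantelli bookkeeping.
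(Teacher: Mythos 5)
Your argument is correct and follows essentially the same route as the paper's proof: a dyadic-in-height Borel--Cantelli argument, covering $X$ by balls of radius comparable to the inverse height, trapping each height block's rational points in a totally isotropic rational subspace via Lemma~\ref{simplexquadric}, and applying the IAD inequality to get summable decay. Your added care with the reduction (extracting infinitely many approximants, noting $\mu\big(X(\Q)\big)=0$ from the IAD property, and enlarging to $B(x_i,2\rho_j)$ so the witnesses lie in the ball where the simplex lemma is applied) only tightens details the paper leaves implicit.
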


\begin{remark}
Recall that a measure $\mu$ is called {\sl $\alpha$-absolutely decaying} if \equ{ad} holds for some $C> 0$,  every $x\in X$ and \emph{every}  subspace $L\subset \PP^n(\R)$, and 
{\sl absolutely decaying} if it is $\alpha$-absolutely decaying for some $\alpha>0$. It follows from \cite[Theorem 1.5]{fkmsgeneral} that for any absolutely decaying measure $\mu$ on $X$ one has $\beta(x) \leq 1$ for $\mu$-almost every $x\in X$. In fact it holds more generally when $X$ is not just a rational quadric but an arbitrary {non-degenerate} smooth hypersurface.

Clearly absolutely decaying measures are IAD but not vice versa. In particular, it is clear that Lebesgue measure on a smooth proper submanifold $M$ of $X$ with $\dim M \geq\rk_{\Q}X$ is not absolutely decaying but $\alpha$-IAD with $\alpha = \dim M - \rk_{\Q}X + 1$; so Theorem \ref{exi} is a corollary from Theorem \ref{extremalquadric}.
\end{remark}

\begin{proof}[Proof of Theorem \ref{extremalquadric}] The argument follows the lines of the proof of \cite[Theorem 1]{PV}, see also \cite{W} for a one-dimensional version.
By the Borel-Cantelli lemma, it is enough to check that for all $\eps>0$,
\[ \sum_{k\geq 1} \mu\left(\big\{x\in X\ \big|\ \exists\, v\in X(\Q):\, \left\{
\begin{array}{c}
2^k\leq H(v) < 2^{k+1}\\
\dist(x,v)\leq 2^{-k(1+\eps)}
\end{array}\right.\big\}\right)
< \infty.\]
Fix $k\geq 1$.
There exists an integer $K$ such that we may cover $X$ by a family of balls $B_i=B(x_i,2^{-k(1+\frac{\eps}{2})})$, $i=1,\dots,N$, so that any intersection of more than $K$ distinct balls is empty.
By Lemma~\ref{simplexquadric}, for $k$ large enough, for each $i$, the set of points $v\in X(\Q)\cap B_i$ satisfying $2^k\leq H(v)<2^{k+1}$ is contained in a totally isotropic rational subspace $L_i$, and therefore, by the IAD property of $\mu$ for some $C,\alpha > 0$ one has
\begin{align*}
\mu\left(\big\{x\in B_i\ \big|\ \exists\, v\in X(\Q):\, \left\{
\begin{array}{c}
2^k\leq H(v) < 2^{k+1}\\
\dist(x,v)\leq 2^{-k(1+\eps)}
\end{array}\right.\big\}\right)
& \leq \mu\left(B_i\cap L_i^{(2^{-k(1+\eps)})}\right)\\
& \leq C2^{-k\alpha\frac{\eps}{2}}\mu(B_i).
\end{align*}
Summing over all balls $B_i$, and using the fact that the cover $(B_i)_{i\in\N}$ has multiplicity at most $K$, we get
\[ \mu\left(\big\{x\in X\ \big|\ \exists\, v\in X(\Q):\, \left\{
\begin{array}{c}
2^k\leq H(v) < 2^{k+1}\\
\dist(x,v)\leq 2^{-k(1+\eps)}
\end{array}\right.\big\}\right)
\leq KC2^{-k\alpha\frac{\eps}{2}}.\]
This finishes the proof of the theorem.
\end{proof}

\begin{remark} 
When $\rk_\Q(X) = 1$, all the subspaces $L$ appearing in Definition \ref{def-ad} are zero-dimensional, and isotropic absolute decay coincides with weak absolute decay as defined in \cite{bgsv}. 
Moreover, in the case where $X$ is a sphere, Theorem \ref{extremalquadric} can be viewed as a corollary of \cite[Theorem~2]{bgsv}.
\end{remark}
\begin{remark}
We could have stated a slightly stronger version of the theorem, in the form of a Khintchine-type theorem: if $\mu$ is $\alpha$-IAD, 
%with associated parameters $C,\alpha$, 
and if $\psi:\R^+\to\R^+$ is a non-increasing function satisfying $$\sum_{k\in\N}k^{\alpha-1}\psi(k)^\alpha<\infty,$$ then for $\mu$-almost every $x$ in $X$  there exists $c>0$ such that
\[ \forall v\in X(\Q),\ \dist(x,v)\geq c\psi\big(H(v)\big).\]
With some minor modifications, our proof works in this slightly more general setting.
\end{remark}

\subsection{Hausdorff dimension and Diophantine exponents}\label{sec:diophantine2}

As a complement to the above study of the extremality problem, we explain here how the simplex lemma can be used to give a simple proof of a recent result of Fishman, Merrill and Simmons \cite{fishmanmerrillsimmons}.
Once again, $X$ denotes a rational quadric projective hypersurface of dimension $n$.
Given $\beta>0$, we shall be concerned with the set
\[ W_\beta=\{x\in X \ |\ \beta(x)\geq\beta\}.\]
%Recall from Theorem~\ref{extremalquadric} that whenever $\beta>1$, the set $W_\beta$ has measure zero; in fact, it is proved in \cite[Theorem~6.4]{fkmsquadric} that $\dim_H W_\beta=\frac{n}{\beta}$.
Given a subset $K$ in $X$, our goal will be to bound the Hausdorff dimension of the intersection $K\cap W_\beta$; we shall be able to do so if $K$ is the support of a sufficiently regular measure.

For $\delta>0$, a Borel measure $\mu$ on a metric space $X$ is said to be \emph{Ahlfors-regular} of dimension $\delta$ if we have, for some constant $A>0$,
\[ \forall\, x\in X\,\forall r\,\in(0,1],\quad
\frac{1}{A}r^\delta
\leq \mu(B(x,r))
\leq Ar^\delta.\]
%Recall also that we say that $\mu$ is $\alpha$-isotropically decaying
%if it satisfies, for some $C\geq 0$, for every $x\in X$ and every totally isotropic rational subspace $L\leq X$,
%\[ \forall\eps,\rho>0,\ 
%\mu(B(x,\rho)\cap L^{(\eps\rho)}) \leq C\eps^\alpha\mu(B(x,\rho)),\]
%where $L^{(\tau)}$ denotes the neighborhood of size $\tau$ of the subspace $L$.
%The measure $\mu$ is said to be \emph{doubling} if there exists a constant $C\geq 0$ such that for all $x\in X$ and $r\in(0,1)$, $\mu(B(x,2r))\leq C\mu(B(x,r))$.
We now present a short proof of a strengthening of \cite[Theorem~1.2]{fishmanmerrillsimmons}, using Lemma \ref{simplexquadric}.
%the version of the simplex lemma given in \S\ref{sec:simplex}.

\begin{theorem}\label{hausdorffexponent}
Let $X$ be a rational quadric projective hypersurface.
Let $\mu$ be an Ahlfors-regular  measure of dimension $\delta$ on $X$, and let $K=\Supp\mu$.
If $\mu$ is $\alpha$-IAD, then we have, for all $\beta\geq 1$,
\eq{dimbound}{\dim_H (K\cap W_\beta) \leq \delta-\alpha(1-\frac{1}{\beta}).}
\end{theorem}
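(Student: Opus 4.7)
The plan is to estimate $\dim_H(K\cap W_\beta)$ by a standard Borel--Cantelli / covering argument, where the key novelty over the ``ambient'' case is that the simplex lemma reduces the approximation problem on $X$ to approximation by totally isotropic rational subspaces, and the IAD assumption then gives extra decay in those directions.

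First I would discretize by height: any $\beta'<\beta$ and any $x\in W_\beta$ admits infinitely many $v\in X(\Q)$ with $\dist(x,v)\leq H(v)^{-\beta'}$. Splitting according to dyadic scales $H(v)\in[2^{k-1},2^k)$, the set of such $x$ is a $\limsup$ of the sets $E_k=\bigcup_{v:\,H(v)\in[2^{k-1},2^k)} B(v,2^{-(k-1)\beta'})$, and it suffices to show that for every $s>\delta-\alpha(1-1/\beta')$ the $s$-Hausdorff measure of $K\cap\limsup E_k$ vanishes, then let $\beta'\uparrow\beta$.

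Next, fix $k$ and choose a covering of $K$ by balls $B_i=B(x_i,\rho_k)$ with $\rho_k=c\,2^{-k}$ (the constant from Lemma~\ref{simplexquadric}) having bounded multiplicity. By the simplex lemma, every $v\in X(\Q)\cap B_i$ with $H(v)\le 2^k$ lies in some totally isotropic rational projective subspace $L_i\subset X$. Consequently, $E_k\cap B_i$ is contained in the $O(2^{-k\beta'})$-neighborhood of $L_i$, i.e.\ in $B_i\cap L_i^{(\eps\rho_k)}$ with $\eps\asymp 2^{-k(\beta'-1)}$. The $\alpha$-IAD hypothesis then gives
\[
\mu\bigl(B_i\cap L_i^{(\eps\rho_k)}\bigr)\;\lesssim\;\eps^\alpha\mu(B_i)\;\asymp\;2^{-k\alpha(\beta'-1)}\rho_k^\delta,
\]
using Ahlfors regularity for the last estimate. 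Summing over $i$ and invoking bounded multiplicity together with the Ahlfors upper bound on the number of balls $B_i$ needed to cover $K$, I get $\mu(K\cap E_k)\lesssim 2^{-k\alpha(\beta'-1)}$.

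Finally, I would cover $K\cap E_k$ by balls of radius $r_k=2^{-k\beta'}$ centered at points of $K$. By an $r_k/2$-separation argument combined with the Ahlfors lower bound, the minimal number of such balls needed is at most $\lesssim \mu\bigl((K\cap E_k)^{(r_k/2)}\bigr)/r_k^\delta\lesssim 2^{-k\alpha(\beta'-1)}\cdot 2^{k\beta'\delta}$. Therefore
\[
\sum_{k} N_k\,r_k^s\;\lesssim\;\sum_{k}2^{k\bigl(\beta'\delta-\alpha(\beta'-1)-\beta's\bigr)},
\]
which converges precisely when $s>\delta-\alpha(1-1/\beta')$, giving the Hausdorff dimension bound and hence \equ{dimbound} after sending $\beta'\to\beta$.

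The main obstacle is bookkeeping with the two different scales: $\rho_k=c\,2^{-k}$ is the scale at which the simplex lemma forces the rational approximants to lie on isotropic subspaces, while $r_k=2^{-k\beta'}\ll\rho_k$ is the finer scale needed for the Hausdorff cover. Matching them correctly via the IAD inequality (and ensuring one only uses the Ahlfors regularity in the regime $r<1$) is where the exponent $\alpha(1-1/\beta)$ arises, and where one must be careful that the covering multiplicity of the $B_i$ is absorbed into a constant independent of $k$.
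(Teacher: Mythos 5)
Your proposal is correct and follows essentially the same route as the paper's proof: dyadic decomposition by height, a cover at scale comparable to $c\,2^{-k}$ so that Lemma~\ref{simplexquadric} places the relevant rational points on totally isotropic subspaces, the $\alpha$-IAD bound on the measure of their thin neighborhoods, an Ahlfors-regularity count of a finer cover at scale $2^{-k\beta'}$, and a Hausdorff--Cantelli summation followed by letting the auxiliary exponent tend to $\beta$. The only differences (bounding $\mu(K\cap E_k)$ globally rather than ball-by-ball, and the small-neighborhood adjustment in the final count) are cosmetic and handled at the same level of detail as in the paper.
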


\begin{remark} \cite[Theorem~1.2]{fishmanmerrillsimmons} establishes \equ{dimbound} under a stronger assumption  that $\mu$ is $\alpha$-absolutely decaying. 
%The statement we have here is in fact stronger than the one in \cite{fishmanmerrillsimmons}, because
  However in our decay condition we only have to consider totally isotropic subspaces.
%This more intrinsic approach to the problem is also what makes the solution simpler.
%\end{remark}
%
%\begin{remark}
%Note that 
In particular, Theorem \ref{hausdorffexponent} covers the case where $K$ is a smooth submanifold of $X$ of dimension at least $\rk_Q(X)$, and therefore generalizes Theorem \ref{bei}.
\end{remark}

The proof of Theorem~\ref{hausdorffexponent}  is a straightforward adaptation of that of \cite[Theorem 2]{PV}. We shall use the easy Hausdorff--Cantelli lemma stated below.

\begin{lemma}[Hausdorff--Cantelli]
Let $(B_i)_{i\geq 0}$ be a family of balls in %$\R^d$,
a metric space, and assume that $\sum_{i\geq 0}(\diam B_i)^s<\infty$.
Then,
\[ \dim_H (\limsup B_i) \leq s.\]
\end{lemma}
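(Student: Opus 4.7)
The plan is to show directly that the $s$-dimensional Hausdorff measure $\mathcal{H}^s(\limsup B_i)$ is zero, which is a stronger statement than $\dim_H(\limsup B_i)\leq s$ and implies it immediately from the definition of Hausdorff dimension. No deep ingredient is required; only the basic properties of the series $\sum_i(\diam B_i)^s$ enter.

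First I would unwind the definition of $\limsup$: one has $\limsup B_i=\bigcap_{N\geq 0}\bigcup_{i\geq N}B_i$, so for every $N$ the family $(B_i)_{i\geq N}$ is a cover of $\limsup B_i$. To turn this into a bound on Hausdorff measure one needs the members of this cover to have small diameter, which is where the hypothesis is used. Since $\sum_i(\diam B_i)^s$ converges, its general term tends to $0$, hence $\sup_{i\geq N}\diam B_i\to 0$ as $N\to\infty$, and simultaneously the tail $\sum_{i\geq N}(\diam B_i)^s$ tends to $0$.

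Now fix $\eta>0$ and choose $N$ large enough so that $\sup_{i\geq N}\diam B_i<\eta$ and $\sum_{i\geq N}(\diam B_i)^s<\eta$. Then $(B_i)_{i\geq N}$ is a countable $\eta$-cover of $\limsup B_i$, so the $\eta$-approximant of the $s$-dimensional Hausdorff measure satisfies
\[
\mathcal{H}^s_\eta(\limsup B_i)\ \leq\ \sum_{i\geq N}(\diam B_i)^s\ <\ \eta.
\]
Letting $\eta\to 0$ yields $\mathcal{H}^s(\limsup B_i)=0$, and therefore $\dim_H(\limsup B_i)\leq s$.

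There is no real obstacle here; the only point worth flagging is that one must extract \emph{both} pieces of information from the convergent series---pointwise smallness of the diameters (to produce an $\eta$-cover at all) and smallness of the tail sum (to bound $\mathcal{H}^s_\eta$). Each of these follows immediately from convergence, so the proof is essentially a one-line application of the definitions.
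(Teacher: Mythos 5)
Your proof is correct: it is exactly the standard Hausdorff--Cantelli argument (tail families $(B_i)_{i\geq N}$ are $\eta$-covers of $\limsup B_i$ with vanishing tail sums, so $\mathcal{H}^s(\limsup B_i)=0$), which is the argument the paper delegates to Bernik--Dodson rather than writing out. The only implicit point is that $s>0$ is needed to pass from $(\diam B_i)^s\to 0$ to $\diam B_i\to 0$, which is harmless in the context where the lemma is applied.
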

\begin{proof}
Left as an exercise, see Bernik--Dodson \cite[Lemma~3.10]{bernikdodson}.
\end{proof}

\begin{proof}[Proof of Theorem~\ref{hausdorffexponent}]
If $\beta=1$, there is nothing to prove, so we assume $\beta>1$ and fix $\gamma\in(1,\beta)$.
For $p\geq 0$, let
\[ A_p = \big\{x\in X\ \big|\ \exists \,v\in X(\Q):\, \left\{
\begin{array}{c}
2^p\leq H(v) < 2^{p+1}\\
\dist(x,v)\leq 2^{-\gamma p}
\end{array}\right.\big\}
.\]
Taking a maximal $2^{-p}$-separated subset $\{x_i\}_{1\leq i\leq \ell_p}$ of $K\cap A_p$, the collection of balls $\cC_p=\big(B(x_i,2^{-p})\big)_{1\leq i\leq \ell_p}$ covers $K\cap A_p$ and has multiplicity bounded above by some constant $C$ depending only on $X$.
Using the Ahlfors regularity of $\mu$, this implies $\ell_p2^{-p\delta}\leq AC\mu(X)=AC$, i.e. $\ell_p\leq AC 2^{p\delta}$.

Since $\gamma>1$, Lemma \ref{simplexquadric} shows that for $p$ large enough, for each ball $B\in\cC_p$, there exists a totally isotropic subspace $L_B$ of $X$ such that $A_p\cap B\subset L_B^{(2^{-\gamma p})}$.
So the decay condition on $\mu$ yields, within multiplicative constants depending only on $X$ and $\mu$, that
\[ \mu(A_p\cap B) \ll 2^{-(\gamma-1)\alpha p}\mu(B) \asymp 2^{-p[\delta+(\gamma-1)\alpha]}.\]
Next, take a minimal cover $\cD_{B}=(B_i)_{i\in I_B}$ of the set $K\cap A_p\cap B$ by balls of radius $2^{-\gamma p}$ centered on $K\cap A_p\cap B$.
Just as above, the Ahlfors regularity of $\mu$ shows that $\# I_B\ll 2^{\delta\gamma p}\mu(A_p\cap B) \ll 2^{p\gamma \delta} 2^{-p[\delta+(\gamma-1)\alpha]}$.
%for the first estimate, $\card I_B \ll 2^{\delta\gamma p}\mu(2^{-\gamma p}-neighborhood of A_p\cap B)$ and it is not hard to check that the measure of the $2^{-\gamma p}$-neighborhood of $A_p\cap B$ satisfies the same measure estimate as $A_p\cap B$ (same proof, using the simplex lemma).
Thus, we find for every $s>0$,
\[ \sum_{B\in\cC_p}\sum_{i\in I_B} (\diam B_i)^s \ll 2^{p\delta}2^{p(\gamma-1)(\delta-\alpha)}2^{-p\gamma s} = 2^{-p[s\gamma-\gamma\delta+\alpha(\gamma-1)]}\]
If $s > \delta-\alpha(1-\frac{1}{\gamma})$, then the family of balls $(B_i)_{i\in I_B,\, B\in\cC_p,p\in\N}$ satisfies the assumption of the Hausdorff--Cantelli lemma, and therefore, letting 
\[ s\to\delta-\alpha(1-{1}/{\gamma})\]
we find that $\dim_H(\limsup B_i)\leq\delta-\alpha(1-\frac{1}{\gamma})$.
Now, since $\gamma<\beta$, we have $K\cap W_\beta\subset(\limsup B_i)$, hence letting $\gamma\to\beta$, we can conclude that  the \hd\ of $K\cap W_\beta$ is not greater than $\delta-\alpha(1-\frac{1}{\beta})$.
\end{proof}

In the case of $\Q$-rank one, any Ahlfors-regular measure of dimension $\delta$ is $\delta$-IAD, so we get the following corollary, which applies in particular when $X=\NS^{n-1}$ is the unit sphere in $\R^{n}$:

\begin{corollary}
Let $X$ be a rational quadric hypersurface of $\Q$-rank one, and let
 $\mu$ be an Ahlfors-regular measure of dimension $\delta$ on $X$.
Writing $K=\Supp\mu$, we have, for every $\beta\geq 1$,
$\dim_H (K\cap W_\beta) \leq \frac{\delta}{\beta}$.
\end{corollary}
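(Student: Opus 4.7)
The plan is to observe that the corollary is a direct consequence of Theorem~\ref{hausdorffexponent} once one verifies that, in the $\Q$-rank one setting, any Ahlfors-regular measure of dimension $\delta$ is automatically $\delta$-IAD. The point is that when $\rk_\Q X = 1$, the totally isotropic rational projective subspaces of $X$ are exactly the points of $X(\Q)$ (as remarked after Lemma~\ref{simplexquadric}), so the IAD condition of Definition~\ref{def-ad} only has to be checked against zero-dimensional subspaces $L=\{v\}$, i.e.\ against single rational points.

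First I would fix $x\in X$, $v\in X(\Q)$, $\rho\in(0,1)$, and $\eps>0$, and check the inequality
\[ \mu\big(B(x,\rho)\cap \{v\}^{(\eps\rho)}\big) \leq C\eps^{\delta}\mu\big(B(x,\rho)\big). \]
The set on the left is contained in $B(v,\eps\rho)$, so the upper Ahlfors bound gives an estimate by $A(\eps\rho)^\delta$, while the lower Ahlfors bound at $x$ gives $\mu(B(x,\rho))\geq A^{-1}\rho^\delta$. Combining these yields the IAD inequality with $\alpha=\delta$ and constant $C=A^2$. (If $x\notin\Supp\mu$ both sides are zero and there is nothing to check.)

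Once this is done, Theorem~\ref{hausdorffexponent} applied with $\alpha=\delta$ immediately gives
\[ \dim_H(K\cap W_\beta) \leq \delta - \delta\bigl(1-\tfrac{1}{\beta}\bigr) = \tfrac{\delta}{\beta}, \]
which is the desired bound. There is no real obstacle here; the only subtle ingredient — the replacement of arbitrary affine subspaces by totally isotropic rational subspaces in the decay condition — has already been dealt with in Lemma~\ref{simplexquadric} and Theorem~\ref{hausdorffexponent}. The value of the corollary lies in this observation: in the $\Q$-rank one case, the IAD assumption is free, so Ahlfors-regularity alone suffices to bound the Hausdorff dimension of very well approximable points.
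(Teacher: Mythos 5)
Your proposal is correct and is exactly the paper's argument: the authors derive the corollary from Theorem~\ref{hausdorffexponent} via the one-line observation that in $\Q$-rank one the only totally isotropic rational projective subspaces are rational points, so Ahlfors regularity of dimension $\delta$ already gives the $\delta$-IAD property (your computation with $C=A^2$ is the standard verification), yielding $\delta-\delta(1-\tfrac1\beta)=\tfrac\delta\beta$.
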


\subsection{Badly approximable points}\label{sec:diophantine3}

%Dirichlet's principle for the quadric hypersurface $X$, as exposed in \cite{fkmsquadric}, states that for every $x\in X$, there exists a constant $C_x$ such that the inequality $d(x,v)\leq C_xH(v)^{-1}$ has infinitely many solutions $v\in X(\Q)$.
%So is it natural to say that a point $x\in X$ is \emph{badly approximable} if there exists $c>0$ such that
%\[ \forall v\in X(\Q),\ d(x,v)\geq cH(v)^{-1}.\]
Recall the definition \equ{defba} of the set $\BA_X$ of intrinsically badly approximable points in $X$.
%It is shown in \cite[Theorem~4.5]{fkmsquadric}
%that the set $\BA_X$ of badly approximable points in the quadric hypersurface $X$ has full Haudsorff dimension.
%In fact, we know from \cite[Theorem~4.3]{fkmsgeneral} that 
As was mentioned in Section \ref{exposition}, it is known \cite{fkmsgeneral} to satisfy some winning properties in the sense of Schmidt's games.
Our goal will now be to give a more elementary proof of a refinement of the winning property, again using the simplex lemma.

In order to study the properties of badly approximable numbers, Schmidt introduced in \cite{schmidt_games} a certain family of games, and the associated {winning} property.
Those games were subsequently studied in numerous papers, among which \cite{bfkrw} is the most relevant for the present purposes.

We now explain the principles of our version of Schmidt's game.
As before, $X$ is a rational quadric hypersurface of $\PP^{n}(\R)$.
There are two players, Alice  and Bob, and some parameter $\beta\in(0,\frac{1}{3})$.
To start, Bob chooses a ball $B_0=B(x_0,\rho_0)$ in $X$.
Then, at each stage of the game, after Bob has chosen a ball $B_i=B(x_i,\rho_i)$, Alice chooses a totally isotropic rational subspace $L$ of $X$ and deletes its neighborhood of size $\eps$, with $0<\eps\leq\beta\rho_i$.

A set $S$ is {\sl isotropically $\beta$-winning} if 	Alice can make sure that $$\bigcap B_i\cap S\neq\varnothing.$$
Finally, $S$ is  {\sl isotropically winning} if it is isotropically $\beta$-winning for arbitrarily small $\beta>0$. This is a strengthening of the hyperplane absolute winning property as defined in \cite{bfkrw} (for the latter, Alice is allowed to delete neighborhoods of arbitrary hyperplanes).
Thus  the following theorem is a refinement of \cite[Theorem~4.3]{fkmsgeneral}:

\begin{theorem}[Badly approximable points on $X$ are winning]
\label{winning}
Let $X$ be a rational quadric hypersurface in $\PP^{n}(\R)$.
Then the set $\BA_X$ %of badly approximable points in $X$ 
is isotropically winning.
\end{theorem}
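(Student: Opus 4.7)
The plan is to give Alice an explicit winning strategy in the isotropic $\beta$-game; all substantive content is packaged into Lemma~\ref{simplexquadric}, and what remains is standard Schmidt-game bookkeeping. If $\rk_\Q X=0$ then $X(\Q)=\varnothing$ and $\BA_X=X$ trivially, so I assume $\rk_\Q X\ge 1$, which provides non-empty totally isotropic rational projective subspaces for Alice to play. Let $c$ denote the constant of Lemma~\ref{simplexquadric} and fix an auxiliary enlargement factor $K\ge 2$, say $K=2$.

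Alice's strategy is as follows. After Bob plays $B_i=B(x_i,\rho_i)$, apply Lemma~\ref{simplexquadric} to the enlarged ball $B(x_i,K\rho_i)$: the set $X(\Q)\cap B(x_i,K\rho_i)\cap\{v:H(v)\le c/(K\rho_i)\}$ is contained in some totally isotropic rational projective subspace $L_i\subset X$ (if this set is empty, pick $L_i$ to be any totally isotropic rational point of $X$). Alice then plays $L_i$ with deletion size $\eps=\beta\rho_i$, which is a legal move. Adopting the standard Schmidt-game convention $\rho_{i+1}\ge\beta\rho_i$, the radii decrease to zero and $\bigcap_iB_i$ is a single point $x$.

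To show $x\in\BA_X$, fix $v\in X(\Q)$ and let $i\ge 0$ be the smallest index with $\rho_i\le c/(KH(v))$. Two cases arise. If $v\in B(x_i,K\rho_i)$ then $H(v)\le c/(K\rho_i)$ forces $v\in L_i$ by the simplex lemma, and since $x\in B_{i+1}\subset B_i\smallsetminus L_i^{(\beta\rho_i)}$ this gives $\dist(x,v)\ge\dist(x,L_i)>\beta\rho_i$. Otherwise $v\notin B(x_i,K\rho_i)$, and the triangle inequality together with $x\in B_i$ yields $\dist(x,v)\ge(K-1)\rho_i$. The minimality of $i$, combined with the shrinkage rule $\rho_{i+1}\ge\beta\rho_i$, forces $\rho_i\gtrsim 1/H(v)$ up to a constant depending only on $\beta$, $c$, $K$, $\rho_0$, so in both cases $\dist(x,v)\ge c'H(v)^{-1}$ for a positive constant $c'$ independent of $v$.

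I do not expect a serious obstacle: Lemma~\ref{simplexquadric} carries the geometric content, and the surrounding framework is a classical Schmidt-game routine. The only real delicacy is parameter coordination — the height window at stage $i$ must span a multiplicative ratio of at least $1/\beta$ so that every rational $v$ is captured at some round (matching the shrinkage $\rho_{i+1}=\beta\rho_i$), and Bob's ball must be inflated by a fixed factor $K$ before the simplex lemma is invoked so that rationals lying just outside $B_i$ are still controlled via the triangle-inequality case.
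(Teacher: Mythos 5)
Your strategy is essentially the paper's own proof: inflate Bob's ball by a fixed factor, apply Lemma~\ref{simplexquadric} there to get one totally isotropic rational subspace $L_i$ capturing all rational points of height at most $c/(K\rho_i)$, delete $L_i^{(\beta\rho_i)}$, and then, for a given $v\in X(\Q)$, run the two-case comparison ($v$ inside or outside the inflated ball) at the first index whose scale captures $H(v)$. That case analysis is correct, and your bookkeeping is in fact slightly more careful than the paper's in two places: the height cutoff $c/(K\rho_i)$ correctly matches the lemma applied to the ball of radius $K\rho_i$, and the bootstrap $\rho_i\gg H(v)^{-1}$ at the initial index (using $H(v)\geq 1$ and minimality of $i$ together with $\rho_i\geq\beta\rho_{i-1}$) handles the low-height rationals that the paper glosses over.

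The one genuine gap is the sentence ``Adopting the standard Schmidt-game convention $\rho_{i+1}\ge\beta\rho_i$, the radii decrease to zero.'' That convention is only a \emph{lower} bound on Bob's radii; in this absolute-type game nothing prevents Bob from keeping $\rho_i\geq\rho_\infty>0$ forever while dodging Alice's deleted neighborhoods. If $\rho_i\not\to 0$, then for every $v$ with $H(v)>c/(K\rho_\infty)$ there is no index $i$ with $\rho_i\le c/(KH(v))$, your final estimate is never triggered for such $v$, and a point of $\bigcap_i B_i$ (which now may contain a whole ball) need not lie in $\BA_X$. So you must either adopt and justify a convention that Alice wins when $\liminf\rho_i>0$, or build shrinkage into the strategy. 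The paper does the latter: when there is no rational point of small height in the inflated ball, Alice instead deletes a $\beta\rho_i$-neighborhood of the center $x_i$, which forces $\dist(x_i,x_{i+1})\geq\beta\rho_i$ while nestedness forces $\dist(x_i,x_{i+1})\leq\rho_i-\rho_{i+1}$, and this is what is invoked to get $\rho_i\to 0$. Add such a device (your fallback move ``any totally isotropic rational point'' should be chosen near the center, not arbitrarily) and your argument coincides with the paper's.
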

\begin{proof}
Fix $\beta\in(0,\frac{1}{3})$.
Bob first picks a ball $B_0=B(x_0,\rho_0)$.
By Lemma~\ref{simplexquadric}, there exists a constant $c>0$ depending only on $X$ such that all rational points $v$ in $2B_0$ satisfying $H(v)\leq c\rho_0^{-1}$ are included in some totally isotropic rational subspace $L_0$.
Alice deletes $L_0^{(\beta\rho_0)}$.
Similarly, once Bob has chosen a ball $B_i=B(x_i,\rho_i)$, the rational points $v\in 2B_i$ such that $H(v)\leq c\rho_i^{-1}$ all lie on a hyperplane $L_i$, and Alice deletes $L_i^{(\beta\rho_i)}$.
If there is no rational point of small height in $B_i$, then Alice can delete a ball of radius $\beta\rho_i$ around the center.
This ensures that $\rho_i\to 0$.

We claim that this strategy forces $\bigcap_{i\geq 0} B_i\subset\BA_X$.
To see this, let $x\in\bigcap B_i$  and $v\in X(\Q)$.
Choose $i$ such that
\begin{equation}\label{rhoi}
c\rho_{i-1}^{-1}
\leq H(v) \leq c\rho_{i}^{-1}.
\end{equation}
If $v\not\in 2B_i$, then, using $x\in B_i$, we find
\[ \dist(x,v)
\geq \rho_i
\geq \beta\rho_{i-1}
\geq \beta cH(v)^{-1}.\]
And if $v\in 2B_i$, then \eqref{rhoi} implies that $v\in L_i$, and since $x\in B_{i+1}$,
\[ \dist(x,v)
\geq \beta\rho_i
\geq \beta^2\rho_{i-1}
\geq \beta^2 cH(v)^{-1}.\]
Taking $c_0=c\beta^2$, we find
\[ \forall\, v\in X(\Q),\ \dist(x,v)\geq c_0H(v)^{-1},\]
so $x\in\BA_X$.
\end{proof}

As is the case with the hyperplane absolute game, the advantage of the isotropic game is the inheritance of winning properties to sufficiently regular subsets.
More precisely, given a compact subset $K\subset X$, we may consider the isotropic game played on $K$.
The rules are the same as before, but the ambient metric space is now $K$: at each stage, Bob chooses a ball $B(x_i,\rho_i)$ centered on $K$, and Alice deletes the intersection of $K$ with the neighborhood of size $\beta\rho_i$ of a rational isotropic subspace.
Naturally, we shall say that a set $S$ is {\sl isotropically winning on $K$} if $S\cap K$ is winning for the isotropic game on $K$.

Following Broderick--Fishman--Kleinbock--Reich--Weiss \cite{bfkrw}, let us
say that a subset $K\subset X$ is {\sl isotropically diffuse} if there exists $\beta,\rho_K>0$ such that for every $\rho\in(0,\rho_K)$, $x\in K$, and every totally isotropic subspace $L$, the set $$K\cap B(x,\rho)\smallsetminus L^{(\beta\rho)}$$ is non-empty.
This is a quantitative way to say that $K$ is nowhere included in a small neighborhood of a totally isotropic subspace.
The next lemma is a straightforward analogue of \cite[Proposition~4.9]{bfkrw}:

\begin{lemma}
%[Proposition~4.9 in \cite{bfkrw}]
\label{diffuse}
%\textnormal{\cite[Proposition~4.9]{bfkrw}}\textbf{.}
Let $X$ be a rational quadric hypersurface in $\PP^{n}(X)$. %\comm{Do we need to say that it has rank $k\geq 1$?}  
%$\Q$-rank or $\R$-rank?
If $L\subset K$ are two isotropically diffuse subsets of $X$, and $S\subset X$ is isotropically winning on $K$, then $S$ is isotropically winning on $L$.
\end{lemma}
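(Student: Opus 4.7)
The approach I would take adapts the inheritance argument from Broderick--Fishman--Kleinbock--Reich--Weiss \cite[Proposition~4.9]{bfkrw} for the hyperplane absolute game, translating Alice's winning strategy on $K$ into a winning strategy on $L$ via a parallel simulation of the two games. Fix a target parameter $\beta\in(0,\tfrac13)$; the goal is to exhibit a $\beta$-winning strategy for Alice in the isotropic game played on $L$.

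Using that $S$ is isotropically winning on $K$, fix a $\beta'$-winning strategy $\sigma$ for Alice on $K$, where $\beta'$ is chosen small enough in terms of $\beta$ and the isotropic diffuseness constant $\beta_L$ of $L$. Given a play of Bob on $L$ producing balls $B_i=B(x_i,\rho_i)$ with $x_i\in L$, I would maintain a virtual play of Bob on $K$ with slightly shrunk balls $\tilde B_i=B(x_i,\tilde\rho_i)$, $\tilde\rho_i\leq \rho_i$, noting that $x_i\in L\subset K$. Feeding $\tilde B_i$ to $\sigma$ produces a totally isotropic rational subspace $L_i$ and a thickness $\tilde\eps_i\leq\beta'\tilde\rho_i$. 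Alice in the $L$-game plays the same $L_i$ with an inflated thickness $\eps_i\leq\beta\rho_i$; the buffer $\eps_i-\tilde\eps_i$ is chosen large enough that any legal response of Bob in the $L$-game contains a concentric sub-ball centered at a point of $L$ that constitutes a legal Bob move in the $K$-game.

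The diffuseness of $L$ is precisely what guarantees Bob always has a legal move in the $L$-game: after any deletion of the form $L_i^{(\eps_i)}$ with $\eps_i\leq\beta_L\rho_i$, he can find a point of $L$ in $B_i$ at distance greater than $\eps_i$ from $L_i$ around which to center his next ball. The diffuseness of $K$ enters implicitly, being what makes $\sigma$ exist in the first place. Since the two parallel games share the same sequence of ball centers and both radii $\rho_i,\tilde\rho_i$ can be arranged to tend to zero, the nested intersections $\bigcap B_i$ and $\bigcap\tilde B_i$ coincide as a single point; Alice's $K$-win forces this point to lie in $S$, and it lies in $L$ since every $B_i\cap L\ne\varnothing$ by the rules of the $L$-game. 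As $\beta\in(0,\tfrac13)$ was arbitrary, $S$ is isotropically winning on $L$.

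The main obstacle I anticipate is the quantitative balancing between the radii $\rho_i,\tilde\rho_i$ and the thicknesses $\eps_i,\tilde\eps_i$. The buffer $\eps_i-\tilde\eps_i$ must be wide enough to absorb whichever radius $\rho_{i+1}$ Bob can choose --- so that a sub-ball of $B_{i+1}$ avoiding $L_i^{(\tilde\eps_i)}$ and centered at a point of $L$ exists --- while simultaneously $\eps_i\leq\beta\rho_i$. This forces $\beta'$ to be taken significantly smaller than $\beta$; however, since $S$ is isotropically winning on $K$ for arbitrarily small parameter, this is affordable, and the scheme closes up.
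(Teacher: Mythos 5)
Your route is the same one the paper takes: the paper's ``proof'' of this lemma is literally a deferral to \cite[Section~4]{bfkrw}, i.e.\ to the strategy-transfer argument behind \cite[Proposition~4.9]{bfkrw}, and the core of your sketch --- read Bob's moves on $L$ as moves on $K$ (legitimate because the centers lie in $L\subset K$), let Alice's winning strategy for the game on $K$ dictate which totally isotropic rational subspace to delete, use the diffuseness of $L$ only to guarantee that Bob always has a legal move so that the game on $L$ is well posed, and conclude that the common point of the two plays lies in $S\cap L$ --- is exactly that argument, correctly identified.

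The problem is the shrinking/buffer bookkeeping you graft onto it: it is both unnecessary and, as literally described, unsustainable. A legal Bob move in the $L$-game is already, verbatim, a legal move in the $K$-game once $\beta'\le\beta$: its center is in $L\subset K$, it is contained in the previous ball, and it avoids $L_i^{(\eps_i)}\supseteq L_i^{(\tilde\eps_i)}$ as soon as $\tilde\eps_i\le\eps_i$; so the ``buffer'' addresses a constraint that is automatically satisfied, and no sub-ball is needed. Meanwhile the constraint your virtual play genuinely must meet, namely $\tilde B_{i+1}\subset\tilde B_i$ with both balls concentric with Bob's, is the one that fails: Bob may recenter at distance roughly $\rho_i-\rho_{i+1}$ from $x_i$, which forces $\tilde\rho_{i+1}\le\rho_{i+1}-(\rho_i-\tilde\rho_i)$, i.e.\ the shrink amounts $\rho_i-\tilde\rho_i$ are non-decreasing; combined with radii tending to $0$ (which you want, and which Bob can force) this eventually gives $\tilde\rho_i<0$, and the simulated $K$-game breaks down. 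The fix is simply to take $\tilde B_i=B_i$ and $\tilde\eps_i=\eps_i\le\beta'\rho_i\le\beta\rho_i$, after which the ``quantitative balancing'' you flag as the main obstacle disappears entirely. Two further points to make explicit in a clean write-up: the limit point lies in $L$ because the centers $x_i$ belong to $L$ and $L$ is closed; and since Bob, not Alice, controls the radii in an absolute-type game, ``the radii can be arranged to tend to zero'' is not something Alice can enforce --- plays with radii bounded below must be handled by the standard convention/reduction, as in \cite{bfkrw}.
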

%Schmidt's winning property is a way to express that a set is large.
%For example, any winning set is dense and has maximal Hausdorff dimension.
%In fact, one can show the following stronger corollary of Theorem~\ref{winning}.
The proof   is identical to the argument presented in \cite{bfkrw}, with obvious modifications to our setting;  we refer the reader to \cite[Section~4]{bfkrw} for details.

\smallskip
It follows from the above lemma and Theorem \ref{winning} that $\BA_X$ is isotropically winning on any isotropically diffuse subset of $X$. This in particular applies to smooth submanifolds $M$ of $X$ of dimension not less than $\rk_\Q(X)$, which are clearly isotropically diffuse. Furthermore, the Lebesgue measure on $M$ as above is clearly Ahlfors-regular of dimension equal to $\dim M$. Therefore, in view of \cite[Lemma~5.3]{bfkrw}, 
for every open subset $U$ of $X$ such that $U\cap M\neq\varnothing$, one has 
\[ \dim_H (M\cap \BA_X\cap U) = \dim M,\]
which implies Theorem \ref{bai}.

%\begin{corollary}
%\label{ba_sm}
%Let $X$ be a rational quadric hypersurface in $\PP^{n}$, with $\Q$-rank $k\geq 1$.
%Let $M$ be a connected $C^1$ submanifold in $X$, with $\dim M\geq k$.
%Then $\BA_X$ is isotropically winning on $M$.
%In particular $\BA_X\cap M$ has maximal Hausdorff dimension in $M$.
%\end{corollary}

\begin{remark}
In the case of $X = \NS^{n-1}$, or more generally of a rational quadric of $\Q$-rank one, the above corollary shows that $\BA_{X}$ is winning on any positive-dimensional submanifold of $X$.
%Note that the problem of showing that $\BA_{\NS^n}$ is winning on any (non-degenerate) submanifold on the sphere is \emph{much} easier than on the projective space,
This can be compared with a similar question for \da\ in Euclidean spaces, 
for which it is still open, despite recent progress due to Beresnevich \cite{beresnevich_ba} and Yang \cite{yang_ba}.
\end{remark}

\section{Further directions and open problems}
\label{sec:open}

\textbf{Khintchine's theorem.}
It would be interesting to use the geometric observations of this note to give an elementary proof of Khintchine's theorem on quadric hypersurfaces, due to Fishman, Kleinbock, Merrill and Simmons \cite[Theorem~6.3]{fkmsquadric}.

\bigskip

\noindent\textbf{Singular points.}
Given a rational quadric $X$ in $\PP^{n}(X)$, one may define, for $c>0$,
\[ D(c) = \left\{x\in X \left|\begin{aligned} 
\exists\, N_0:\,\forall\, N\geq N_0\ \exists\, v\in X(\Q) \text{ such that}\\
H(v) \leq N\text{  and }\dist(x,v) \le \frac c{\sqrt{NH(v)}}\end{aligned}\right.\right\}, \]
and call a point $x\in X$ \emph{singular} if $x\in\bigcap_{c>0}D(c)$.
%When $X$ is a unit sphere, it is proved in  \cite[Theorem 4.1]{kleinbockmerrill} that $D(C) = X$ for $C$ large enough; an explicit estimate for such $C$ was later found by Moshchevitin \cite{moshchevitin}{\tt https://arxiv.org/pdf/1510.06214.pdf}. The same holds for any quadric of $\R$-rank $1$ \cite[Theorem~5.1(iii)]{fkmsquadric}.
If $X$ has $\Q$-rank $1$, it follows from Dani's work \cite{dani_divergent} that $x$ is singular if and only if $x\in X(\Q)$.
%Indeed, it is readily seen from Proposition~\ref{daniquadric} that if $x$ is singular, then the orbit $g_t^x\Z^d$ is divergent in the space $G(\R)/G(\Z)$.
%When this space has rank $1$, Dani has showed that every divergent diagonal trajectory comes from a rational point.
%One question remains open: can one find a non-singular point $x$ such that the trajectory $g_t^x\Z^d$ is divergent? Note that this is impossible when $X$ has $\Q$-rank one, because then the divergent trajectories are all rational.
In fact, one can show that if $X$ has $\Q$-rank $1$, $D(c)=X(\Q)$ for $c>0$ small enough.
This follows for example from the following strengthening of Lemma~\ref{simplexquadric}, whose proof is identical up to some minor changes. See also \cite[Theorem~3]{sumofsquares} for an alternative proof.
%\comm{Please add a reference to  {\tt https://arxiv.org/pdf/1803.09306.pdf}.}

\begin{lemma}[A stronger simplex lemma for quadric hypersurfaces]
Let $X$ be a rational quadric hypersurface in $\PP^{n}(\R)$.
Then there exists $c>0$ such that, for every $x\in X$ and any $\rho \in(0,1)$, 
the set
\[  \left\{v\in X(\Q)\ \left|\ H(v)\leq c\rho^{-1}, \ \dist(x,v) \le \sqrt{\frac{\rho}{H(v)}}\right.\right\} \]
is contained in a totally isotropic rational subspace $L\subset X$.
\end{lemma}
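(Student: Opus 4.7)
The proof will follow the scheme of Lemma~\ref{simplexquadric}, but instead of extracting a uniform norm bound on $g_t^x \bv$ we will compute the bilinear form $B_Q$ directly in the basis provided by $u_x$. As in the proof of Lemma~\ref{simplexquadric} we may assume that $Q$ is integer-valued on $\Z^{n+1}$ and, in some integer basis, takes the form $Q = 2x_1 x_{n+1} + \tilde Q(x_2, \dots, x_n)$. Fix $x \in X$; given $v \in X(\Q)$ with primitive integer representative $\bv$, decompose $u_x \bv = \alpha \be_1 + \bw + \beta \be_{n+1}$ with $\alpha, \beta \in \R$ and $\bw \in \Span(\be_2, \dots, \be_n)$. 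Then the proof of Lemma~\ref{daniquadric} supplies a constant $C_1$, depending only on $X$, such that
\[ |\alpha| \leq C_1 H(v), \qquad \|\bw\| \leq C_1 H(v) \dist(x,v), \qquad |\beta| \leq C_1 H(v) \dist(x,v)^2. \]

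Now let $v_1, \dots, v_s$ be rational points in the set from the statement, with primitive integer representatives $\bv_k$ decomposed as $u_x \bv_k = \alpha_k \be_1 + \bw_k + \beta_k \be_{n+1}$. Since $u_x$ preserves $B_Q$, expansion in this basis gives
\[ B_Q(\bv_i, \bv_j) = \alpha_i \beta_j + \beta_i \alpha_j + \tilde B_Q(\bw_i, \bw_j). \]
The hypothesis $\dist(x, v_k)^2 \leq \rho/H(v_k)$ yields $H(v_k)\dist(x,v_k)^2 \leq \rho$; combined with $H(v_k) \leq c\rho^{-1}$ and the three bounds above, this makes each term on the right bounded by $c$ times a constant depending only on $X$. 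For instance $|\alpha_i \beta_j| \leq C_1^2 H(v_i) \cdot H(v_j) \dist(x, v_j)^2 \leq C_1^2 H(v_i) \rho \leq C_1^2 c$, and symmetrically for $|\beta_i \alpha_j|$; moreover $|\tilde B_Q(\bw_i, \bw_j)|$ is at most a constant times $\sqrt{H(v_i)\rho}\cdot\sqrt{H(v_j)\rho}$, which is itself at most $c$ times a constant.

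Choosing $c$ small enough so that $|2 B_Q(\bv_i, \bv_j)| < 1$, and using the identity $2 B_Q(\bv_i, \bv_j) = Q(\bv_i + \bv_j) - Q(\bv_i) - Q(\bv_j) = Q(\bv_i + \bv_j) \in \Z$, one concludes that $B_Q(\bv_i, \bv_j) = 0$ for all $i, j$. Together with $Q(\bv_k) = 0$, this forces every $\Q$-linear combination of the $\bv_k$ to be isotropic, so their span is a totally isotropic rational subspace of $X$, as required.

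\textbf{Main obstacle.} Unlike in the original simplex lemma, the natural choice $e^t = \rho^{-1}$ makes $\|g_t^x \bv\|$ only bounded, not small: the component $e^t \beta$ is of order $\rho^{-1}\cdot\rho = 1$ under the weaker hypothesis $\dist(x,v)^2 \leq \rho/H(v)$, so the direct approach of bounding $Q(\bv_i \pm \bv_j) \leq C\|g_t^x\bv_i \pm g_t^x \bv_j\|^2$ fails. The decisive cancellation comes from the cross structure of $Q = 2x_1 x_{n+1} + \tilde Q$ in the $u_x$-basis: in each monomial of $B_Q(\bv_i, \bv_j)$ the potentially large coordinate of one vector is paired with the small coordinate of the other, so the bilinear form remains small even when the individual vectors are only of bounded norm. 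Keeping careful track of the absolute constants to guarantee $|2 B_Q(\bv_i, \bv_j)| < 1$ uniformly in $\rho$ is the main technical point.
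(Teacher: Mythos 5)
Your proof is correct, and it is essentially a ``de-dynamized'' version of the argument the paper has in mind. The authors assert that the proof is identical to that of Lemma~\ref{simplexquadric} up to minor changes, and indeed the only change needed in the dynamical argument is a rescaling of $t$: taking $e^t=\sqrt{c}\,\rho^{-1}$ instead of $e^t=\rho^{-1}$, Lemma~\ref{daniquadric} gives, for every $v$ in the set of the statement, $e^{-t}H(v)\le\sqrt c$, $H(v)\dist(x,v)\le\sqrt{H(v)\rho}\le\sqrt c$ and $e^tH(v)\dist(x,v)^2\le e^t\rho=\sqrt c$, hence $\|g_t^x\bv\|\le C\sqrt c$, after which the estimate $|Q(\bv_i\pm\bv_j)|\le 4C_1C^2c<1$ and the integrality argument go through verbatim (one could also choose $t$ depending on the pair $(i,j)$). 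So your ``main obstacle'' paragraph overstates the difficulty: the direct approach does not fail, only the specific choice $e^t=\rho^{-1}$ does. Your alternative --- expanding $B_Q$ in the $u_x$-basis so that $B_Q(\bv_i,\bv_j)=\alpha_i\beta_j+\beta_i\alpha_j+\tilde B_Q(\bw_i,\bw_j)$ and pairing the large coordinate of one vector against the small coordinate of the other --- is a valid and arguably more transparent way to exhibit the same cancellation; the $a_t$-invariance of $Q$ exploited in the paper encodes exactly this cross-term pairing, and the endgame (integrality of $Q(\bv_i+\bv_j)$, hence vanishing of $B_Q(\bv_i,\bv_j)$, hence total isotropy of the rational span) is identical in both routes. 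One small point of bookkeeping: as in the paper's proof of Lemma~\ref{simplexquadric}, the case $X(\Q)\subset\ker Q$ should be disposed of first (the statement is then trivial, with $L=[\ker Q]$), and the normal form \eqref{qgoodform} used only afterwards; your opening reference to that reduction implicitly covers this, but it is worth stating.
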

When the quadric $X$ has $\Q$-rank at least $2$, it is natural to expect that there exist some nontrivial singular points. It might then be interesting to compute the Hausdorff dimension of the set of singular points on $X$, similarly to what has been done in \cite{cheung,cheung-chevallier} for \da\ in the Euclidean space.

\bigskip

\noindent\textbf{Extremality.}
In view of the definitive results in the area of \da\ on manifolds and fractals obtained in \cite{KLW, kleinbock-margulis}, it is %Theorem~\ref{extremalquadric} and Corollary~\ref{rankoneextremal}, it is 
natural to attempt to weaken the condition of isotropic absolute decay of $\mu$ as in Theorem~\ref{extremalquadric}, and
conjecture that on a general quadric hypersurface, any analytic submanifold that is not included in an isotropic  subspace is extremal.
In fact, by analogy with \cite{kleinbock_inheritance, kleinbock_inheritance2}, one can guess that an analytic submanifold on a quadric hypersurface inherits its Diophantine exponent from the smallest totally isotropic subspace containing it.
The proof of these two facts requires an appropriate quantitative non-divergence statement in the space of lattices, and should appear elsewhere.

\bigskip

\noindent\textbf{Other projective varieties.}
One may wonder how general is the approach presented here, and whether it can be used to study intrinsic Diophantine approximation on varieties that are not quadric hypersurfaces.
Some partial answers are given in \cite{breuillardsaxce_subspace}, where Diophantine approximation on a generalized flag variety is studied.
However, many questions remain open in this generality, for which we refer the reader to \cite{breuillardsaxce_subspace}.

\Addresses

\end{document}